\theoremstyle{theorem}
\newtheorem{theorem}{Theorem}
\theoremstyle{lemma}
\newtheorem{lemma}{Lemma}
\theoremstyle{definition}
\newtheorem{definition}{Definition}
\newtheorem*{remark}{Remark}
\newtheorem{proposition}{Proposition}
\newtheorem{corollary}{Corollary}
\numberwithin{equation}{section}
\begin{document}

\title[Extending a Function Just by Multiplying and Dividing Function Values]{Extending a Function Just by Multiplying and Dividing Function Values:\ \ Smoothness and Prime Identities}

%\thanks{Grants or other notes
%about the article that should go on the front page should be
%placed here. General acknowledgments should be placed at the end of the article.}

%\subtitle{Do you have a subtitle?\\ If so, write it here}

\author[P. A. Miller]{Patrick Arthur Miller \\   \\ AT\&T Laboratories (retired) \\ Middletown, New Jersey, USA \\ \lowercase{email:\ \  patrickarthurmiller@gmail.com}}
%\address{Bell Telephone Laboratories / Bell Communications Research / AT\&T Labs (``retired'') \\}
%\email{patrickarthurmiller@gmail.com}

% The correct dates will be entered by the editor
\begin{abstract}
We describe a purely-multiplicative method for extending an analytic function.  It calculates the value of an analytic function at a point, merely by multiplying together function values and reciprocals of function values at other points closer to the origin.  The function values are taken at the points of geometric sequences,  independent of the function, whose geometric ratios are arbitrary.  The  method exposes an ``elastic invariance'' property of all analytic functions.  We show how to simplify and truncate multiplicative function extensions for practical calculations.   If we choose each geometric ratio to be the reciprocal of a power of a prime number, we obtain a prime functional identity, which contains a generalization of the M\"obius function (with the same denominator as the Riemann zeta function), and generates prime number identities.
\end{abstract}

\keywords{analytic continuation, prime numbers, Riemann zeta function, M\"obius function}
\subjclass{65E05, 30B40, 26A15, 11Y35}

\maketitle

%\ \newpage
%\textbf{A Purely-Multiplicative Function Extension Method}\ \ \\
%{\centering{Patrick Arthur Miller}\par}

%%%%% END OF TITLE PAGE %%%%%%%%%%%%%%%%%%%%%%%%%%%%%%%%%%%%%%%%%%%%%%\

\setcounter{tocdepth}{1}
\tableofcontents

\section{Introduction}

Taylor Series and the Cauchy Integral Theorem extend an analytic function outward from a point or inward from a curve, respectively, using differentiation or integration.  We describe a purely-multiplicative method for extending an analytic function.  Our method does not use caculus. It calculates the value of an analytic function at a point in the complex plane, merely by multiplying  together function values and reciprocals of function values, obtained at other points closer to the origin.  The function values are obtained at the points of geometric sequences, independent of the function, whose geometric ratios are arbitrary.

The  method exposes a hitherto-unknown invariance property of all analytic functions.   If we vary the geometric ratios, the evaluation points of the analytic function move, but the product of function values and reciprocals of function values is invariant.  We call this property of analytic functions ``elastic invariance''.  A non-smooth function cannot have elastic invariance, because the moving function-evaluation points would cross any discontinuities of the function or its derivatives, causing the product of function values and reciprocals of function values to vary.  ``Has elastic invariance'' could serve to define smoothness for a function or curve without the use of calculus.

If we choose a common geometric ratio for every geometric sequence, we can simplify practical calculation of truncated multiplicative function extensions.  We describe other results, such as truncation error, isolating a component factor of a function, and expressing the derivative of a function in terms of an infinite product of function values .

If, on the other hand, we choose each geometric ratio to be distinct, namely, a power of the reciprocal of a distinct  prime number, we obtain a prime functional identity for each analytic function.  The prime functional identity contains a generalization of the M\"obius function (with the same denominator $1/n^s$ as the Riemann zeta function), and generates prime number identities.

\section{Multiplicative Function Extension}

\begin{theorem}[Multiplicative Function Extension]
\label{thm:main}
 Suppose that $U \subset \mathbb{C}$ is an open disk-shaped neighborhood of the origin and that $z \in U$.  Suppose that the function $f:\mathbb{C} \rightarrow \mathbb{C}$ is analytic and nonzero in $U$ and that $f(0)=1$.\ \ Let $ F_{\mathbb{N},\, max=m}^* =\ \{ S\,| ,\, S \subset \mathbb{ N},\ S \neq \emptyset ,\ \max(S)=m\}$ be the family of nonempty subsets of the positive integers whose maximum element is $m$.   For every integer $k\geq 1$, let  $r_k \in \mathbb{C}$ with $|r_k|>1$\, and\ \ $\Re\left((r_k)^k\right) \geq 1/2$\,.\ \  Then
\begin{equation}
\label{eq:quotient}
f(z)\, = \prod_{m=1}^\infty \,\,\  \prod_{S \in  F_{\mathbb{N},\,\, max=m}^*} \,\,\,  \prod_{\substack{(\forall k \in  S) \\
n_k = 1
}}^\infty\  \left[ f\left(\left[\prod\limits_{k\in S}\frac{[(r_k)^k-1]^{1/k}}{(r_k)^{n_k}}\right] \cdot z \right)\right]^{\big{(-1) ^{\,| S|-1 }}}\,.
\end{equation}
\end{theorem}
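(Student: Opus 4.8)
The plan is to take logarithms and reduce the claimed product identity to a family of scalar identities, one for each Taylor coefficient. Since $U$ is a disk, hence simply connected, and $f$ is nonvanishing with $f(0)=1$, there is a well-defined holomorphic branch $g:=\log f$ on $U$ with $g(0)=0$; write its Taylor expansion as $g(z)=\sum_{j=1}^\infty a_j z^j$. Taking the logarithm of the right-hand side of \eqref{eq:quotient} turns the triple product into the triple sum
\[
\sum_{m=1}^\infty\ \sum_{S\in F_{\max=m}^*}\ \sum_{(n_k)_{k\in S}}\ (-1)^{|S|-1}\, g\!\left(c_{S,(n_k)}\, z\right),\qquad c_{S,(n_k)} := \prod_{k\in S}\frac{[(r_k)^k-1]^{1/k}}{(r_k)^{n_k}} .
\]
Because $g\mapsto g(c\cdot z)$ acts on $z^j$ by multiplication by $c^j$, it suffices to verify, for every $j\ge 1$, that the total coefficient of $a_j z^j$ produced by this sum equals $1$; summing over $j$ then recovers $g(z)$, and exponentiating recovers \eqref{eq:quotient}.

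First I would carry out the two harmless summations. The innermost sum over the independent indices $n_k\ge 1$ factorizes across $k\in S$, and each factor is a geometric series $\sum_{n\ge1}(r_k)^{-jn}=1/((r_k)^j-1)$, convergent precisely because $|r_k|>1$. This produces, for the subset $S$, the product $\prod_{k\in S} b_k^{(j)}$ with
\[
b_k^{(j)} := \frac{[(r_k)^k-1]^{j/k}}{(r_k)^j-1}.
\]
The middle sum over $S\in F_{\max=m}^*$ is finite (there are $2^{m-1}$ such subsets), so no convergence question arises there; writing each such $S$ as $\{m\}\cup T$ with $T\subseteq\{1,\dots,m-1\}$ and factoring out $b_m^{(j)}$ collapses it to $b_m^{(j)}\prod_{k=1}^{m-1}(1-b_k^{(j)})$.

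The heart of the argument is then the outer sum over $m$, which telescopes. Setting $P_M:=\prod_{k=1}^M(1-b_k^{(j)})$ with $P_0=1$, the term for index $m$ is $b_m^{(j)}P_{m-1}=P_{m-1}-P_m$, so the partial sums equal $1-P_M$. The crucial observation is that the data force $b_j^{(j)}=[(r_j)^j-1]/[(r_j)^j-1]=1$, whence the factor $(1-b_j^{(j)})$ vanishes and $P_M=0$ for every $M\ge j$. Thus the partial sums stabilize at $1$ for all $M\ge j$, giving exactly the required coefficient. This also reveals the design of the family $F_{\max=m}^*$: grouping subsets by their maximal element is precisely what converts the otherwise unordered subset sum into a telescoping series that terminates.

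Two points require care, and the second is the real obstacle. The condition $\Re((r_k)^k)\ge\tfrac12$ has a clean geometric meaning: writing $w=(r_k)^k$, it is equivalent to $|w-1|\le|w|$, i.e. $|(r_k)^k-1|^{1/k}/|r_k|\le1$; hence every factor of $c_{S,(n_k)}$ has modulus at most $1$, so all evaluation points $c_{S,(n_k)}z$ remain inside $U$, where $f$ is analytic and nonvanishing and the series for $g$ converges — this is what legitimizes expanding each $g(c\,z)$ as $\sum_j a_j c^j z^j$. The genuine difficulty is justifying the interchange of summations and the passage from the product to the sum of logarithms, since the subset sum is only conditionally convergent: the term-by-term bound $|b_k^{(j)}|\le|r_k|^j/(|r_k|^j-1)$ does not tend to $0$, so one cannot simply invoke absolute convergence and reorder freely. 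I would therefore argue through the finite partial products $\Pi_M(z)$ (product over $m\le M$), which are genuine convergent products of analytic functions, show via the telescoping computation that $\log\Pi_M$ and $g$ share all Taylor coefficients through order $M$, and then upgrade this coefficientwise agreement to convergence $\Pi_M\to f$ uniformly on compact subsets of $U$ by bounding the tail $\sum_{j>M}a_j P_M^{(j)}z^j$. Controlling that tail — balancing the possible growth of $|P_M^{(j)}|$ in $M$ against the decay of $a_j z^j$ — is the step I expect to be most delicate, and where the full strength of both hypotheses on the $r_k$ must be used.
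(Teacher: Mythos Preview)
Your approach is essentially the paper's: both pass to $\log f$, work Taylor-coefficientwise (the paper packages this as the function-factor decomposition $f=\prod_k f_k$ with $f_k=\exp(c_kz^k)$), group subsets by their maximal element, and hinge on the diagonal identity at $k=j$. Where you compute the subset sum explicitly as $1-\prod_{k\le M}(1-b_k^{(j)})$ and note that the factor $1-b_j^{(j)}$ vanishes, the paper argues by a parity bijection: its Lemma~2 gives $P(f_j,S)=P(f_j,S\setminus\{j\})$, and deleting $j$ is a cardinality-flipping bijection between subsets of $\{1,\dots,k\}$ containing $j$ and those not, so odd and even contributions cancel pairwise to give $Q(f_j,F_{\le k})=1$. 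These are two presentations of the same inclusion--exclusion identity $\sum_S(-1)^{|S|}\prod_{k\in S}b_k=\prod_k(1-b_k)$; your telescoping makes the termination at $m=j$ more transparent, while the paper's bijection is slightly slicker set-theoretically. On convergence you are more scrupulous than the paper: it invokes the GEO ordering and the Riemann rearrangement caveat but does not supply the tail estimate you identify as delicate, so your proposal is at least as complete on that front.
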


\begin{remark}
More explicitly, the theorem states that
\begin{align}
\label{eq:explicit}
f(z)\ &=\ \prod_{n_1 = 1}^\infty f\left( \frac{r_1-1}{(r_1)^{n_1}} \cdot z \right)\ \times\ \frac{ \prod\limits_{n_2 = 1}^\infty f\left( \frac{\left[(r_2)^2-1\right]^{1/2}}{(r_2)^{n_2}} \cdot z \right)}{ \prod\limits_{n_1,n_2 = 1}^\infty f\left( \frac{(r_1-1)\,\left[(r_2)^2-1\right]^{1/2}}{(r_1)^{n_1}\,(r_2)^{n_2}} \cdot z \right)} \nonumber \ \\
&\ \ \ \ \ \ \ \ \ \ \ \ \ \times\ \frac{ \prod\limits_{n_3 = 1}^\infty f\left( \frac{\left[(r_3)^3-1\right]^{1/3}}{(r_3)^{n_3}} \cdot z \right)\,  \prod\limits_{n_1,n_2,n_3 = 1}^\infty f\left( \frac{(r_1-1)\,\left[(r_2)^2-1\right]^{1/2}\,\left[(r_3)^3-1\right]^{1/3}}{(r_1)^{n_1}\,(r_2)^{n_2}\,(r_3)^{n_3}} \cdot z \right)}{ \prod\limits_{n_1,n_3 = 1}^\infty f\left( \frac{(r_1-1)\,\left[(r_3)^3-1\right]^{1/3}}{(r_1)^{n_1}\,(r_3)^{n_3}} \cdot z \right)\, \prod\limits_{n_2,n_3 = 1}^\infty f\left( \frac{[(r_2)^2-1]^{1/2}\,\left[(r_3)^3-1\right]^{1/3}}{(r_2)^{n_2}\,(r_3)^{n_3}} \cdot z \right)}\times\cdots 
\end{align}
\end{remark}

\section{Simple Notation for Multiplicative Function Extension}
To make Theorem~\ref{thm:main} more easily understandable and easier to prove, we decompose it into several parts using the following definitions.

\begin{definition}  We define a geometric sequence $X$ for a \textbf{single positive integer}.\ \ For the positive interer $k$,\ let $r_k \in \mathbb{C}$ with $1 < |r_k| < \infty$.\ \ We define the \uline{geometric sampling sequence}
generated by the positive-integer $k$,  with index $n_k$ and geometric ratio $1/ r_k$,\ \ to be\,\ $\{X_{k,n_k}\}_{n_k=1}^\infty$\,,\, where 
\begin{equation*}
\label{eq:places}
X_{k,n_k}\, =\, \left[\left(r_k\right)^k-1\right]^{1/k}\,\cdot\frac{1}{(r_k)^{n_k}}\,\,.
\end{equation*}
\end{definition}
\begin{definition}  We define a product $P$ for a \textbf{set of positive integers}.  We define $P(f,\, S,\,z)$, the \uline{geometric sampling product} for the function $f: \mathbb{C} \rightarrow \mathbb{C}$, the positive-integer set $S\subset \mathbb{N}$, and the point $z$ to be
\begin{equation}
\label{eq:manyfactors}
 P(f,\, S,\,z)\,\,=\,  \prod_{\substack{
(\forall k \in  S) \\
n_k = 1
}}^\infty \, f\left( \left[\prod_{k\in  S}\ X_{k,n_k}\right] z\right)\,.
\end{equation}
\end{definition}
\begin{remark}  To calculate $P$, we first form function evaluation points by multiplying $z$ by one value from each of the geometric sampling sequences whose integer $k$ is in $S$.  We evaluate the function at all such points and multiply all of the function values.
\end{remark}

\begin{definition}  We define a qotient $Q$ for a \textbf{family of sets of positive integers}.  Suppose that $F$ is a family of sets, consisting of any finite subsets of the positive integers $\mathbb{ N}$.  We define the \uline{geometric sampling quotient} for the nonzero function $f: \mathbb{C} \rightarrow \mathbb{C}$ with $f(0)=1$, the family of finite positive-integer sets $F$, and the point $z$ to be
\begin{equation}
Q\left(f,\, F,\,z\right) \,=\,  \frac{\ \ \prod\limits_{ S \in  F,\,\,| S|\, \textrm{is odd}}\,\, P\left(f,\, S,\,z\right)\ } {\,\prod\limits_{ S \in  F,\,\,| S|\, \textrm{is even}}\, P\left(f,\, S,\,z\right)}\,. \label{eq:Q}
\end{equation}
\end{definition}
\begin{remark}  The exponent $(-1)^{|S|-1}$ in Eq.~\ref{eq:quotient} places the function value in the numerator (denominator) if the set $S$ contains an odd (even) number of integers, as in Eq.~\ref{eq:Q}.
\end{remark}

\begin{remark}  Equation~\ref{eq:quotient} (Theorem~\ref{thm:main}) can be expressed very simply in terms of $Q$ and $P$:
\begin{align}
f(z) &=\  Q(f,\,F_{\mathbb{N}}^*,\,z)\ _{(GEO)} 
\ \nonumber \\
&=\ P(f,\,\{1\},\,z)  \times  \frac{P(f,\,\{2\},\,z)}{P,\,(f,\,\{1,2\},\,z)} \times \frac{P(f,\,\{3\},\,z) P(f,\,\{1,2,3\},\,z)}{P(f,\,\{1,3\},\,z) P(f,\,\{2,3\},\,z)} \times \cdots \label{eq:egGEO1} 
\end{align}
 (GEO) indicates that the geometric sampling products are to be multiplied and divided in the Greatest Element Order of their integer subsets, as described in Definition~\ref{def:GEO}, below.
 \end{remark}

\begin{definition} \label{def:GEO}
\uline{Greatest Element Order} (GEO) is an ordering of  families of nonempty subsets of $\mathbb{N}$ in which
\begin{enumerate}
\item The first subset is \{1\} (if present)\,,
\item The next group of subsets are those whose greatest element is 2:\  \{2\} and \{1, 2\} (if present)\,,
\item The next group of subsets are those whose greatest element is 3: \ \{3\}, \{1, 2, 3\}, \{1, 3\}, and \{2, 3\} (if present)\,,
\item etc.
\end{enumerate}
\end{definition}

\begin{remark}
Greatest Element Order is illustrated in Eq.~\ref{eq:egGEO1}\,.  Note that Greatest Element Order partitions the nonempty subsets of $\mathbb{N}$ into equivalence classes with the equivalence relation ``have the same greatest element".  Greatest Element Ordering orders these equivalent classes, but does not specify the position of a particular subset within its equivalence class.  The index $m$ in Theorem~\ref{thm:main} labels these equivalence classes.
\end{remark}

\section{Preliminary Lemmas and Definitions}

To prove Theorem \ref{thm:main}, we need the following lemmas and definitions.  They describe the many useful properties of the product $P$ and quotient $Q$, and define useful integer sets.\newline

Theorem~\ref{thm:main} applies only to analytic functions.  If the complex function $f$ is analytic and nonzero in a neighborhood of the origin, therein it has an analytic logarithm that has a power series \cite{Conway}, and, since $f(0)=1$, can be written as
\begin{equation}
\label{eq:taylog}
f(z) = e^{\ln(f)} = \exp\left(\sum_{k=1}^\infty c_k\,z^k\right)= \prod_{k=1}^\infty \exp\left(c_k\,z^k\right) = \prod_{k=1}^\infty\exp \left[\frac{1}{k!}\left(\frac{d^k \ln f(z)}{dz^k}\right)_{z=0}z^k \right]\,,
\end{equation}
where we have used Taylor Series to decompose the function $\ln f$ into \uline{terms} of an infinite sum.
The following definition formalizes the decomposition of the function $f$ into \uline{factors} of an infnite product.
\begin{definition}
Assume that $f$ is analytic in a neighborhood of the origin and that $f(0)=1$.  We call
\begin{equation}
f_k(z)=\exp(c_k\,z^k),\ \  \textrm{where}\ \ c_k = \frac{1}{k!}\left(\frac{d^k \ln f(z)}{dz^k}\right)_{z=0}\,,
\end{equation}
 the \uline{$k^{th}$ function factor of $f$},\ because $f(z) = \prod\limits_{k=1}^\infty\ f_k(z)$.
\end{definition}
\begin{remark}
When $f(0)=1$, $c_0=0$, and so $f_0=1$ (a factor of 1) can be omitted.
\end{remark}
In what follows
\begin{itemize}
\item We assume the functions $f$ and $g$ satisfy the requirements of Theorem~\ref{thm:main}.
\item  An asterisk (*) attached as a superscript to a family of sets, as in $F^*$, indicates that the empty set $\emptyset$ is not a member of the family of sets.
\item For simplicity we shall often omit the argument $z$ from the product $P$ and quotient $Q$, to focus on their other arguments:  the function $f$ and the integer set $S$ or family of sets $F$.
\end{itemize}

The first lemma describes an invariance, key to the proof of Theorem~\ref{thm:main}.
\begin{lemma} \label{lem:Invar}
\item The geometric sampling product for the function factor $f_k$ and the set $\{k\}$ is the function $f_k$ itself:
\begin{equation}
\label{eq:Invar}
 P(f_k,\,\{k\})\, =\, f_k\,.
\end{equation}
\end{lemma}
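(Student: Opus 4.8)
The plan is to exploit the fact that, by its very definition, the $k$th function factor is a pure exponential monomial. Writing $c_k = \frac{1}{k!}\left(\frac{d^k \ln f}{dz^k}\right)_{z=0}$, the definition gives $f_k(z) = \exp(c_k z^k)$, a constant times $z^k$ inside an exponential. The decisive structural feature is that $f_k$ raises its argument to the $k$th power, while the sampling abscissa $X_{k,n_k}$ was defined with precisely the prefactor $[(r_k)^k-1]^{1/k}$. I expect these two ingredients to conspire so that all dependence on the geometric ratio $r_k$ telescopes away, leaving $f_k$ exactly.

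First I would substitute the monomial form of $f_k$ into the single-set product definition $P(f_k,\{k\},z) = \prod_{n_k=1}^\infty f_k\!\left(X_{k,n_k}\,z\right)$, obtaining a product of exponentials. Since a (suitably convergent) product of exponentials is the exponential of the corresponding sum, this becomes
\[
P(f_k,\{k\},z)\ =\ \exp\!\left(c_k\,z^k \sum_{n_k=1}^\infty \left(X_{k,n_k}\right)^k\right).
\]
Next I would compute the $k$th power of the abscissa. Raising $[(r_k)^k-1]^{1/k}$ to the $k$th power recovers $(r_k)^k-1$ for \emph{any} branch of the root, so $\left(X_{k,n_k}\right)^k = \left[(r_k)^k-1\right](r_k)^{-k n_k}$. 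Setting $a=(r_k)^k$, the inner sum is $(a-1)\sum_{n_k=1}^\infty a^{-n_k}$, a geometric series.

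The hypothesis $|r_k|>1$ guarantees $|a|>1$, so the series converges absolutely to $\tfrac{1}{a-1}$, and the factor $(a-1)$ cancels exactly. The exponent collapses to $c_k z^k$, yielding $P(f_k,\{k\},z) = \exp(c_k z^k) = f_k(z)$, which is the assertion of Equation~\ref{eq:Invar}.

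I do not anticipate a serious obstacle here; the calculation is short, and its real content is the exact cancellation engineered by the choice of $[(r_k)^k-1]^{1/k}$. The two points that deserve explicit care are, first, verifying that the branch ambiguity of the $k$th root is genuinely immaterial (it is, because only $\left(X_{k,n_k}\right)^k$ enters, and that quantity is branch-independent), and second, noting that $|r_k|>1$ alone suffices for convergence of the geometric series — the sharper condition $\Re\!\left((r_k)^k\right)\geq 1/2$ appearing in Theorem~\ref{thm:main} is needed only for other parts of the construction, not for this lemma.
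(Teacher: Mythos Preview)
Your proposal is correct and follows essentially the same route as the paper's own proof: substitute $f_k(z)=\exp(c_k z^k)$, convert the product to an exponential of a geometric sum, and observe that the prefactor $(r_k)^k-1$ cancels against the closed form of that sum. Your added remarks on branch independence and on $|r_k|>1$ sufficing for convergence are sound refinements that the paper does not make explicit.
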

\begin{proof}\ \ The proof is elmentary algebra:
\begin{align*}
 P(f_k,\,\{k\},\,z)\, &=  \prod_{n_k=1}^\infty\, f_k\left(X_{k,n_k}\cdot z\right) \\ &=\  \prod_{n_k=1}^\infty\, f_k\left(\frac{\left[(r_k)^k-1)^{1/k}\right]\, z}{(r_k)^{n_k}}\right) \nonumber \\ 
&= \prod_{n_k=1}^\infty\, \textrm{exp}\left[c_k \left(\frac{\left[(r^k)-1\right]^{1/k}\, z}{(r_k)^{n_k}}\right)^k\right] \nonumber \\ 
&=\  \exp\left[c_kz^k\left[(r_k)^k-1\right]\sum_{n_k=1}^\infty\, \left(\frac{1}{(r_k)^k}\right)^{n_k}\right] \nonumber  \\
&=\  \exp\left[c_kz^k\left[(r_k)^k-1\right]\frac{1/r^k}{1-1/r^k}\right] \nonumber \\
&=\  \exp(c_kz^k) \ =\  f_k(z)\,.  \nonumber
\end{align*}
\end{proof}
\begin{lemma} \label{lem:erasek}
Let $S$ be any finite subset of $  \mathbb{N}$.  Then
\begin{equation}
 P(f_k,\, S) =  P(f_k,\, S \backslash\,\{k\})\,. \label{eq:erasek}
\end{equation}
\end{lemma}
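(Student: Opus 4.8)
The plan is to reduce the whole statement to the single geometric-series computation already carried out in the proof of Lemma~\ref{lem:Invar}. First I would dispose of the trivial case: if $k \notin S$ then $S \backslash \{k\} = S$ and Eq.~\ref{eq:erasek} is an identity with nothing to prove. So the real content is the case $k \in S$, where I must show that the index $k$ contributes a neutral factor of $1$ to the product and can therefore be deleted without changing anything.

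For that case I would expand $P(f_k,\,S,\,z)$ using $f_k(w) = \exp(c_k w^k)$. Because every factor of the product is an exponential, the product over the multi-index $(n_j)_{j \in S}$ turns into the exponential of a sum, and using $\left[\prod_{j \in S} X_{j,n_j}\right]^k = \prod_{j \in S} X_{j,n_j}^k$ I get
\begin{equation*}
P(f_k,\,S,\,z) = \exp\left(c_k z^k \sum_{\substack{(\forall j \in S) \\ n_j = 1}}^\infty \ \prod_{j \in S} X_{j,n_j}^k\right).
\end{equation*}
The key step is that each $X_{j,n_j}$ depends only on its own index $n_j$, so the multi-index sum factors into a product of independent single-index geometric series, each summing exactly as in Lemma~\ref{lem:Invar}:
\begin{equation*}
\sum_{\substack{(\forall j \in S) \\ n_j = 1}}^\infty \ \prod_{j \in S} X_{j,n_j}^k \ =\ \prod_{j \in S}\ \sum_{n_j = 1}^\infty X_{j,n_j}^k \ =\ \prod_{j \in S} \frac{[(r_j)^j - 1]^{k/j}}{(r_j)^k - 1}.
\end{equation*}
Now I would single out the factor indexed by $j = k$: its value is $[(r_k)^k - 1]^{k/k} \big/ [(r_k)^k - 1] = 1$. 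This is precisely the cancellation that the factor $[(r_k)^k - 1]^{1/k}$ in the definition of $X_{k,n_k}$ was engineered to produce, and it shows that dropping $k$ from $S$ removes only a factor of $1$ from the exponent. Hence the exponent, and therefore $P(f_k,\,S,\,z)$ itself, is unchanged, which is Eq.~\ref{eq:erasek}.

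The only point requiring care — the ``hard part'' — is justifying the two reorganizations: interchanging the infinite product with the exponential, and factoring the multi-index sum into a product of single sums. Both are legitimate because the hypothesis $|r_k| > 1$ gives $|1/(r_j)^k| < 1$, so each geometric series converges absolutely; absolute convergence lets the iterated and multi-index sums agree and be rearranged freely, and lets the product of exponentials be combined into a single exponential of the (convergent) total exponent. Everything else is the elementary algebra already displayed in Lemma~\ref{lem:Invar}, now run in parallel over the several indices of $S$.
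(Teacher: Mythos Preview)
Your proof is correct, but it takes a different route from the paper's. The paper does not expand everything into geometric series; instead it separates the index $k$ from the rest of $S$ structurally: for $k\in S$ it writes
\[
P(f_k,S,z)=\prod_{\substack{(\forall j\in S\setminus\{k\})\\ n_j=1}}^\infty\;\prod_{n_k=1}^\infty f_k\!\left(X_{k,n_k}\cdot\Big[\prod_{j\in S\setminus\{k\}}X_{j,n_j}\Big]z\right),
\]
recognizes the inner product as $P\big(f_k,\{k\},z'\big)$ with $z'=\big[\prod_{j\in S\setminus\{k\}}X_{j,n_j}\big]z$, and then invokes Lemma~\ref{lem:Invar} as a black box to replace it by $f_k(z')$. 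What remains is exactly the definition of $P(f_k,S\setminus\{k\},z)$.

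The trade-off: your approach is fully explicit and shows in one line \emph{why} the $j=k$ factor equals $1$, but to get there you compute all the series $\sum_{n_j}X_{j,n_j}^k$ for every $j\in S$, which is more work than needed. The paper's approach never evaluates the $j\neq k$ series at all; it hides them in the shifted argument $z'$ and reuses Lemma~\ref{lem:Invar} verbatim, which is shorter and makes the logical dependence on Lemma~\ref{lem:Invar} transparent.
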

\begin{remark}  In the product $ P(f_k,\,S)$, the component $f_k$ effectively ``removes'' any $k$ in $S$.
\end{remark}
\begin{proof}
This is obviously true for any $ S$ that does not contain $k$, since then $ S \backslash \{k\} =  S$.  To prove it when the set $S$ does contain the integer $k$, we separate the $k$-th coefficient of $z$ from the others, and then apply Eq.~\ref{eq:Invar} to $z'=\ $[non-$k$ coefficients]$\cdot z$.
\begin{align*}
P(f_k,\,S,z) &= \prod_{\substack{(\forall j \in  S) \\
n_j = 1
}}^\infty f_k\left(\left[\prod_{j\in S}X_{j,n_j}\right]z\right) \\
&=  \prod_{\substack{(\forall j \in  S\backslash k) \\
n_j = 1
}}^\infty\,\,\prod_{n_k=1}^\infty f_k\left(X_{k,n_k}\cdot \left[\prod_{j\in S \backslash \{k\}}X_{j,n_j}\right]z\right) \\
&=  \prod_{\substack{(\forall j \in  S\backslash k) \\
n_j = 1
}}^\infty\,\,P\left( f_k,\,\{k\},\,\left[\ \prod_{j\in S \backslash \{k\}}X_{j,n_j}\right]z\right) \\
&=  \prod_{\substack{(\forall j \in  S\backslash k) \\
n_j = 1
}}^\infty f_k\left( \left[\prod_{j\in S \backslash \{k\}}X_{j,n_j}\right]z\right) \\
&=  P(f_k,\,S \backslash \{k\},\, z)\,.
\end{align*}
\end{proof}  

\begin{lemma} \label{lem:emptysetf}
For consistency, we must have $P(f_k,\,\emptyset) = f_k$.
\end{lemma}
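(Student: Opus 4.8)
The plan is to obtain the value of $P(f_k,\,\emptyset)$ as a forced consequence of the two preceding lemmas, rather than by any fresh computation. First I would specialize Lemma~\ref{lem:erasek} to the singleton set $S = \{k\}$. Since $\{k\} \backslash \{k\} = \emptyset$, Equation~\ref{eq:erasek} gives immediately $P(f_k,\,\{k\}) = P(f_k,\,\emptyset)$. Next I would invoke Lemma~\ref{lem:Invar}, which asserts $P(f_k,\,\{k\}) = f_k$. Chaining these two identities yields $P(f_k,\,\emptyset) = f_k$, which is exactly the claimed consistency condition. In other words, the value of $P(f_k,\,\emptyset)$ is not free to choose: the already-established behavior of $P$ on singletons, together with the ``erasure'' identity, pins it down.

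To confirm that this is genuinely consistent with the definition of $P$ and not an ad hoc stipulation, I would check the empty-product conventions directly in Equation~\ref{eq:manyfactors}. When $S = \emptyset$, the inner product $\prod_{k \in S} X_{k,n_k}$ ranges over no factors and so equals $1$, making the function argument simply $z$. At the same time, the outer product is indexed by tuples $(n_k)_{k \in S}$; with $S$ empty there is precisely one such tuple, namely the empty tuple, so the outer product contributes a single factor $f_k(z)$. Hence the standard empty-product and empty-index conventions already deliver $P(f_k,\,\emptyset) = f_k(z)$, in full agreement with the result forced by Lemmas~\ref{lem:Invar} and~\ref{lem:erasek}.

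The statement is essentially definitional, so I do not expect any analytic obstacle. The only point requiring care is ensuring that the two independent routes---the algebraic chaining of the earlier lemmas and the bookkeeping of empty products over the empty index set---agree, which they do. This agreement is precisely what makes $P(f_k,\,\emptyset) = f_k$ the correct convention to adopt: it guarantees that Lemma~\ref{lem:erasek} remains valid even in the degenerate case where deleting $k$ from $S$ empties the set, so that no special case need be carved out in later applications of the erasure identity.
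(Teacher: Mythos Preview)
Your argument is correct and is exactly the paper's own proof: the paper writes the single chain $f_k = P(f_k,\{k\}) = P(f_k,\{k\}\backslash\{k\}) = P(f_k,\emptyset)$, invoking Lemma~\ref{lem:Invar} and then Lemma~\ref{lem:erasek} with $S=\{k\}$, just as you do. Your added check of the empty-product conventions in Equation~\ref{eq:manyfactors} is a nice supplement but not present in the paper.
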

\begin{proof}
\begin{equation}
\label{eq:emptysetf}
f_k = P(f_k,\,\{k\}) = P(f_k,\,\{k\}\backslash \{k\}) = P(f_k,\,\emptyset)\,.
\end{equation}
\end{proof}  

\begin{lemma} \label{lem:prodf}
If $f$ and $g$ are functions of $z$, then
\begin{equation} \label{eq:prodf}
P(f\cdot g,\, S) = P(f,\, S) \cdot P(g,\, S)\,.
\end{equation}
\end{lemma}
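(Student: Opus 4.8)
The plan is to unwind the definition of the geometric sampling product $P$ for the product function $f\cdot g$ and then split the resulting product into one product of $f$-values and one product of $g$-values. All of the content of the lemma is the elementary pointwise identity $(f\cdot g)(w)=f(w)\,g(w)$ together with the commutativity of multiplication in $\mathbb{C}$; the only genuine issue is that $P$ is an \emph{infinite} product, so the splitting must be justified by convergence rather than by finite distributivity alone.

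First I would substitute the product function directly into Eq.~\ref{eq:manyfactors}. Writing $w_{(n_k)}=\left[\prod_{k\in S}X_{k,n_k}\right]z$ for the evaluation point attached to the multi-index $(n_k)_{k\in S}\in\mathbb{N}^{|S|}$, the definition of $P$ gives
\[
P(f\cdot g,\,S,\,z)\ =\ \prod_{\substack{(\forall k\in S)\\ n_k=1}}^\infty (f\cdot g)\big(w_{(n_k)}\big)\ =\ \prod_{\substack{(\forall k\in S)\\ n_k=1}}^\infty f\big(w_{(n_k)}\big)\,g\big(w_{(n_k)}\big)\,.
\]
For any finite collection of multi-indices the factorization $\prod_i (a_i b_i)=\prod_i a_i\cdot\prod_i b_i$ is immediate from commutativity of multiplication, so every finite partial product of the right-hand side equals the corresponding partial product of $P(f,S,z)\cdot P(g,S,z)$. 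It then remains only to pass to the limit over an exhausting sequence of finite index sets.

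The step to watch is exactly this passage to the limit: an infinite product cannot be split into two factors unless both factors themselves converge. This is not a real obstacle here, because $f$ and $g$ satisfy the standing hypotheses of Theorem~\ref{thm:main} (analytic, nonzero, and equal to $1$ at the origin), which forces each of $P(f,S,z)$ and $P(g,S,z)$ to converge to a nonzero value; indeed, the exponential computation carried out in the proof of Lemma~\ref{lem:Invar} shows that each such product reduces to an absolutely convergent geometric series in the exponent, since $|r_k|>1$. With convergence of both $P(f,S,z)$ and $P(g,S,z)$ in hand, the limit of the partial products of the right-hand side factors as the product of the two limits, namely $P(f,S,z)\cdot P(g,S,z)$, which is precisely Eq.~\ref{eq:prodf}.
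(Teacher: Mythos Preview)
Your argument is correct and follows the same idea as the paper's proof, which is the one-line observation that $P$ is a product of function values and multiplication in $\mathbb{C}$ is commutative. You are simply more careful than the paper in justifying the passage from finite partial products to the infinite product via convergence; the paper leaves that step implicit.
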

\begin{proof}
This is true because $P$ is a product of function values and multiplication is commutative.
\end{proof}  

\begin{lemma} \label{femptyset}
For any analytic function$f$, $P(f,\,\emptyset) = f$.
\end{lemma}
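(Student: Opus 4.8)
The plan is to reduce the general claim to the single-factor case of Lemma~\ref{lem:emptysetf}, using the multiplicativity of $P$ from Lemma~\ref{lem:prodf} together with the decomposition of $f$ into its function factors. Since $f$ satisfies the hypotheses of Theorem~\ref{thm:main}, Eq.~\ref{eq:taylog} gives $f(z)=\prod_{k=1}^{\infty}f_k(z)$, so it suffices to show that $P(\,\cdot\,,\emptyset)$ sends this product of factors to the product of the factors themselves.

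First I would extend Lemma~\ref{lem:prodf} from two functions to an arbitrary finite product by a one-line induction, obtaining $P\!\big(\prod_{k=1}^{N}f_k,\,\emptyset\big)=\prod_{k=1}^{N}P(f_k,\emptyset)$ for every $N$. Applying Lemma~\ref{lem:emptysetf} to each factor on the right replaces $P(f_k,\emptyset)$ by $f_k$, so the $N$-th partial product already equals $\prod_{k=1}^{N}f_k$. Passing to the limit then yields
\[
P(f,\emptyset)\;=\;P\!\Big(\prod_{k=1}^{\infty}f_k,\,\emptyset\Big)\;=\;\prod_{k=1}^{\infty}P(f_k,\emptyset)\;=\;\prod_{k=1}^{\infty}f_k\;=\;f .
\]

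The one delicate point is the middle equality above: Lemma~\ref{lem:prodf} is stated for two functions, so interchanging $P(\,\cdot\,,\emptyset)$ with the infinite product requires that $\prod_k f_k$ converge and that $P(\,\cdot\,,\emptyset)$ commute with the limit. Both are easily supplied. The series $\sum_k c_k z^k$ in Eq.~\ref{eq:taylog} converges uniformly on compact subsets of $U$, so the partial products $\prod_{k=1}^{N}f_k$ converge to $f$ there. Moreover, when $S=\emptyset$ there are no indices $n_k$ over which to range and the bracketed factor $\prod_{k\in\emptyset}X_{k,n_k}$ is the empty product $1$, so $P(g,\emptyset,z)$ is simply the single evaluation $g(1\cdot z)=g(z)$; this makes $P(\,\cdot\,,\emptyset)$ manifestly continuous under the limit, and in fact already shows $P(f,\emptyset,z)=f(z)$ directly. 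I expect this direct collapse of the empty-index product to be the cleanest way to discharge the sole gap in the factor-by-factor argument.
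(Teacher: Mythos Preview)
Your argument is correct and follows essentially the same route as the paper: decompose $f=\prod_{k} f_k$ via Eq.~\ref{eq:taylog}, use the multiplicativity of $P$ from Lemma~\ref{lem:prodf} to pull the product outside, apply Lemma~\ref{lem:emptysetf} to each factor, and reassemble. Your closing observation---that $P(g,\emptyset,z)=g(z)$ directly because $\prod_{k\in\emptyset}X_{k,n_k}=1$ and there are no indices to range over---is actually a cleaner one-line proof than the factor-by-factor argument the paper gives, and it sidesteps the limit interchange entirely.
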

\begin{proof}
Lemma~\ref{lem:prodf} implies
\begin{equation}
\label{eq:femptyset}
P(f,\emptyset)\ =\ P(\prod_{k=1}^\infty f_k,\,\emptyset)\ =\ \prod_{k=1}^\infty P(f_k, \emptyset)\ =\ \prod_{k=1}^\infty f_k\ =\ f\,.
\end{equation}
\end{proof}  

\begin{definition}
For any family $F$ of finite sets, we define
\begin{align*}
 F^{\,\text{odd}} & = \{ S\,|\, S \in  F,\,| S|\,\text{\,is\,odd}\}\,, \\
 F^{\,\text{even}} & = \{ S\,|\, S \in  F,\,| S|\,\text{\,is\,even}\}\,.
\end{align*}
\end{definition}

\begin{lemma} \label{lem:Qprod}
\begin{equation} \label{eq:Qprod}
Q\left(f \cdot g\,,\, F\right) = Q\left(f,\,F\right) \cdot Q\left(g,\,F\right).
\end{equation}
\end{lemma}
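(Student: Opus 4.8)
The plan is to unfold the definition of the geometric sampling quotient for the product function $f \cdot g$ and then peel off the $f$- and $g$-contributions factor by factor using Lemma~\ref{lem:prodf}. By Eq.~\ref{eq:Q},
\[
Q(f \cdot g,\, F) \;=\; \frac{\prod_{S \in F^{\text{odd}}} P(f \cdot g,\, S)}{\prod_{S \in F^{\text{even}}} P(f \cdot g,\, S)}\,.
\]
The first step is to rewrite every single factor in both the numerator and the denominator using Lemma~\ref{lem:prodf}, replacing each $P(f \cdot g,\, S)$ by $P(f,\,S)\,P(g,\,S)$.

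Next, since each geometric sampling product is itself just a product of complex function values, multiplication is commutative, and I would regroup the numerator as $\left(\prod_{S \in F^{\text{odd}}} P(f,\,S)\right)\left(\prod_{S \in F^{\text{odd}}} P(g,\,S)\right)$ and the denominator analogously over $F^{\text{even}}$. The resulting quotient then splits as
\[
\frac{\prod_{S \in F^{\text{odd}}} P(f,\,S)}{\prod_{S \in F^{\text{even}}} P(f,\,S)} \;\cdot\; \frac{\prod_{S \in F^{\text{odd}}} P(g,\,S)}{\prod_{S \in F^{\text{even}}} P(g,\,S)}\,,
\]
and each of the two quotients is exactly $Q(f,\,F)$ and $Q(g,\,F)$ by Eq.~\ref{eq:Q}, which yields the claim.

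The one point requiring care — and the only place this argument could fail — is the regrouping of the products when $F$ is infinite, as it is in the intended application to Theorem~\ref{thm:main} where $F = F_{\mathbb{N}}^*$. Separating $\prod_S P(f,\,S)\,P(g,\,S)$ into $\left(\prod_S P(f,\,S)\right)\left(\prod_S P(g,\,S)\right)$ is a rearrangement of an infinite product and must be justified by absolute convergence rather than taken as a purely formal identity. Under the standing hypotheses that $f$ and $g$ are analytic and nonzero in a disk about the origin, each function factor, and hence each $P$, is a well-defined nonzero value whose logarithm is given by an absolutely convergent series on the relevant geometric sequences; this is precisely what makes the termwise passage to logarithms and the subsequent regrouping legitimate, and I would record it so that the manipulation is rigorous and not merely symbolic.
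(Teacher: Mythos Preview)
Your proof is correct and follows essentially the same route as the paper: unfold the definition of $Q$, apply Lemma~\ref{lem:prodf} to each factor in numerator and denominator, regroup, and recognize $Q(f,\,F)\cdot Q(g,\,F)$. Your additional paragraph on the rearrangement of infinite products is a welcome caveat that the paper's own one-line proof does not make explicit.
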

\begin{proof}
This follows from Lemma~\ref{lem:prodf}.
\begin{equation}
Q(f \cdot g, F)\ =\ \frac{P(f \cdot g, F^{\,\text{odd}})}{P(f \cdot g, F^{\,\text{even}})}\ =\ \frac{P(f, F^{\,\text{odd}}) \cdot P(g, F^{\,\text{odd}})}{P(f, F^{\,\text{even}}) \cdot P(g, F^{\,\text{even}})}\ =\  Q\left(f,\,F\right) \cdot Q\left(g,\,F\right)\,.
\end{equation}
\end{proof}

%\begin{lemma} \label{lem:Punion}
%If $F_1$ and $F_2$ are two families of sets of positive integers, and if $F_1 \cap F_2 = \emptyset$,
%\begin{equation} \label{eq:Qunion}
%P\left(f,\,F_1 \cup F_2\right) = P\left(f,\,F_1\right) \cdot P\left(f,\,F_2\right)\,.
%\end{equation}
%\end{lemma}
%\begin{proof}
%This follows from the fact that  $P$ is a product over sets.
%\end{proof}  

\begin{lemma} \label{lem:Qunion}
If $F_1$ and $F_2$ are two families of sets of positive integers, and if $F_1 \cap F_2 = \emptyset$,
\begin{equation} \label{eq:Qunion}
Q\left(f,\,F_1 \cup F_2\right) = Q\left(f,\,F_1\right) \cdot Q\left(f,\,F_2\right)\,.
\end{equation}
\end{lemma}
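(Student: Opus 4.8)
The plan is to mirror the structure of the proof of Lemma~\ref{lem:Qprod}, reducing this quotient statement for $Q$ to the product statement for $P$ already established in Lemma~\ref{lem:Punion}. First I would unfold the definition of $Q$ applied to the union, writing
\begin{equation*}
Q(f,\,F_1 \cup F_2)\ =\ \frac{P\bigl(f,\,(F_1 \cup F_2)^{\text{odd}}\bigr)}{P\bigl(f,\,(F_1 \cup F_2)^{\text{even}}\bigr)}\,,
\end{equation*}
where, as in the preceding lemmas, $P$ applied to a family abbreviates the product of $P$ over the member sets of that family.

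The central observation is that the parity partition commutes with union. Since whether $|S|$ is odd or even is a property of the set $S$ alone, independent of which family contains it, we have $(F_1 \cup F_2)^{\text{odd}} = F_1^{\text{odd}} \cup F_2^{\text{odd}}$ and likewise $(F_1 \cup F_2)^{\text{even}} = F_1^{\text{even}} \cup F_2^{\text{even}}$. Moreover, the hypothesis $F_1 \cap F_2 = \emptyset$ forces $F_1^{\text{odd}} \cap F_2^{\text{odd}} = \emptyset$ and $F_1^{\text{even}} \cap F_2^{\text{even}} = \emptyset$, so the disjointness requirement of Lemma~\ref{lem:Punion} is satisfied separately for each parity.

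Applying Lemma~\ref{lem:Punion} to the odd and even parts then factors both the numerator and the denominator into a product of an $F_1$-term and an $F_2$-term. Regrouping the four resulting factors into the $F_1$-pair and the $F_2$-pair and recognizing each pair as a quotient $Q$ yields $Q(f,\,F_1 \cup F_2) = Q(f,\,F_1)\cdot Q(f,\,F_2)$. I expect no real obstacle here: the only thing requiring care is the elementary bookkeeping that the odd/even subfamilies stay disjoint and that their unions reconstitute the odd/even subfamilies of $F_1 \cup F_2$, both of which are immediate because parity depends only on the set and not on the ambient family.
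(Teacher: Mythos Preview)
Your proposal is correct and follows essentially the same route as the paper: unfold $Q$ on the union, split the odd and even parts of $F_1\cup F_2$ into their $F_1$- and $F_2$-pieces using Lemma~\ref{lem:Punion}, and regroup into $Q(f,F_1)\cdot Q(f,F_2)$. The paper's version is just terser, compressing your parity-bookkeeping into a single displayed equation.
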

\begin{proof}
This follows from the definition of $Q$.
\begin{equation}
Q\left(f,\,F_1 \cup F_2\right)\ =\ \frac{Q(f, F_1^{\,\text{odd}}) \cdot Q(f, F_2^{\,\text{odd}})}{Q(f, F_1^{\,\text{even}}) \cdot Q(f, F_2^{\,\text{even}})}\ =\ Q\left(f,\,F_1\right) \cdot Q\left(f,\,F_2\right)\,.
\end{equation}
\end{proof}  

\begin{lemma} \label{lem:emptyset}
Let $F_\mathbb{N}$ be the family of all subsets of $\mathbb{N}$ including $\emptyset$, and let $F_\mathbb{N}^*$ be the family of all subsets of $\mathbb{N}$ excluding $\emptyset$.  Then
\begin{equation}
Q(f\,, F_\mathbb{N}) = \frac{Q(f,\,F_ \mathbb{N}^*)}{f}. \label{eq:emptyset}
\end{equation}
\end{lemma}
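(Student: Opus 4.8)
The plan is to isolate the single extra member—the empty set—that distinguishes $F_\mathbb{N}$ from $F_\mathbb{N}^*$, and then to exploit the multiplicativity of $Q$ over disjoint families of sets established in Lemma~\ref{lem:Qunion}. Since $F_\mathbb{N}^*$ is by definition $F_\mathbb{N}$ with the empty set removed, I would first write $F_\mathbb{N}$ as the disjoint union $F_\mathbb{N} = F_\mathbb{N}^* \cup \{\emptyset\}$, noting that these two families share no member set (the first excludes $\emptyset$, the second contains only $\emptyset$), so the hypothesis $F_\mathbb{N}^* \cap \{\emptyset\} = \emptyset$ of Lemma~\ref{lem:Qunion} is satisfied. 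Applying that lemma with $F_1 = F_\mathbb{N}^*$ and $F_2 = \{\emptyset\}$ factors the quotient as $Q(f, F_\mathbb{N}) = Q(f, F_\mathbb{N}^*) \cdot Q(f, \{\emptyset\})$.

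It then remains to evaluate the single-family quotient $Q(f, \{\emptyset\})$. The only member of the family $\{\emptyset\}$ is the empty set, whose cardinality $|\emptyset| = 0$ is even; hence in the defining quotient (Eq.~\ref{eq:Q}) it contributes to the denominator while the numerator, being an empty product, equals $1$. Thus $Q(f, \{\emptyset\}) = 1 / P(f, \emptyset)$, and by Lemma~\ref{femptyset} we have $P(f, \emptyset) = f$, giving $Q(f, \{\emptyset\}) = 1/f$. Substituting this into the factorization above yields $Q(f, F_\mathbb{N}) = Q(f, F_\mathbb{N}^*)/f$, which is exactly Eq.~\ref{eq:emptyset}.

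The argument is essentially bookkeeping, so I do not expect a serious obstacle; both the disjoint-union factorization of $Q$ and the evaluation $P(f, \emptyset) = f$ are already in hand from the preceding lemmas. The only point that genuinely requires care is the parity of the empty set: because $0$ is even, $\emptyset$ falls into the denominator of $Q$, which is precisely the reason the factor $f$ appears in the denominator of the final identity rather than the numerator. Getting this parity right is what fixes the direction of the correction, and it is the one place where a careless reader might introduce a sign (i.e.\ inversion) error.
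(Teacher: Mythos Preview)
Your proof is correct and follows essentially the same route as the paper: decompose $F_\mathbb{N}$ as the disjoint union $F_\mathbb{N}^* \cup \{\emptyset\}$, apply Lemma~\ref{lem:Qunion}, then use $|\emptyset|=0$ even together with $P(f,\emptyset)=f$ from Lemma~\ref{femptyset} to obtain the factor $1/f$. Your write-up is in fact slightly more careful than the paper's in distinguishing the family $\{\emptyset\}$ from the set $\emptyset$ and in citing Lemma~\ref{femptyset} explicitly.
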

\begin{proof}
This follows from $F_\mathbb{N} = F_\mathbb{N}^* \cup \emptyset$\,, Lemma~\ref{lem:Qunion}, and the fact that $| \emptyset | =0$ is even.
\begin{equation}
Q(f,\,F_\mathbb{N})\ =\ Q(f,\,F_\mathbb{N}^*) Q(f,\,\emptyset)\ =\ \frac{Q(f,\,F_\mathbb{N}^*)}{P(f,\, \emptyset)}\ =\ \frac{Q(f,\,F_\mathbb{N}^*)}{f}\,.
\end{equation}
\end{proof}  

\begin{lemma} \label{lem:Qsubsetminus}
If $F_1$ and $F_2$ are two families of sets of positive integers, and if $F_1 \subset F_2$\,,
\begin{equation} \label{eq:Qsubsetminus}
\frac{Q\left(f,\,F_2\right)}{Q\left(f,\,F_1\right)} = Q\left(f,\,F_2 \setminus F_1\right)\,.
\end{equation}
\end{lemma}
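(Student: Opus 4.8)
The plan is to reduce the claim to the multiplicativity of $Q$ over disjoint unions of families, which is precisely Lemma~\ref{lem:Qunion}. The whole content of the statement is that the hypothesis $F_1 \subset F_2$ lets me split $F_2$ into two disjoint subfamilies to which that lemma applies.

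First I would partition $F_2$ using the subset hypothesis. Because $F_1 \subset F_2$, the families $F_1$ and $F_2 \setminus F_1$ together cover all of $F_2$, and they are disjoint, $F_1 \cap (F_2 \setminus F_1) = \emptyset$. Hence $F_2 = F_1 \cup (F_2 \setminus F_1)$ is a disjoint union, so the hypothesis of Lemma~\ref{lem:Qunion} is met.

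Next I would apply Lemma~\ref{lem:Qunion} to this decomposition to obtain
\[
Q\left(f,\,F_2\right) = Q\left(f,\,F_1 \cup (F_2 \setminus F_1)\right) = Q\left(f,\,F_1\right) \cdot Q\left(f,\,F_2 \setminus F_1\right),
\]
and then divide both sides by $Q\left(f,\,F_1\right)$ to recover the stated identity.

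The only step requiring any care — and the closest thing to an obstacle — is justifying that $Q\left(f,\,F_1\right) \neq 0$, so that the division is legitimate. This follows from the standing hypotheses of Theorem~\ref{thm:main}: since $f$ is analytic and nonzero on $U$ and the conditions $|r_k| > 1$ keep all the sampling points $\left[\prod_{k \in S} X_{k,n_k}\right] z$ inside $U$, each factor $P(f,\,S)$ is a convergent product of nonzero function values and is therefore nonzero. Consequently $Q\left(f,\,F_1\right)$ is a ratio of nonzero products and is nonzero, so dividing by it is valid and the proof is complete.
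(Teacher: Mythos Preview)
Your proof is correct and follows exactly the paper's approach: both reduce to Lemma~\ref{lem:Qunion} via the disjoint decomposition $F_2 = F_1 \cup (F_2 \setminus F_1)$ and then cancel $Q(f,F_1)$. Your additional remark that $Q(f,F_1)\neq 0$ is a welcome detail that the paper leaves implicit.
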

\begin{proof}  This follows from Lemma~\ref{lem:Qunion}.
\begin{equation}
\frac{Q(f, F_2)}{Q(f ,F_1)}\ =\ \frac{Q(f, F_1) \cdot Q(f, F2 \setminus F_1)}{Q(f, F_1)}\ =\ Q(f, F_2 \setminus F_1)\,.
\end{equation}
\end{proof}  

\begin{definition}  Let $ F_{\leq k}$ be the family of all finite subsets of the first $k$ positive integers, excluding $\emptyset$.  Let $j \in\{1, 2,\ldots, k\}  $, and let $ F_{\leq k \ni j}$ be the subset of  $ F_{\leq k}$ consisting of sets that contain the integer $j$.   And let $ F_{\leq k,\backslash j}$ be the subset of $ F_{\leq k}$ consisting of sets that do not contain $j$:
\begin{align}
 F_{\leq k} & = \{ S\,|\, S \subset \{1, 2, \ldots, k\}\,, \\
 F_{\leq k \ni j} & = \{ S\,|\, S \subset \{1, 2, \ldots, k\},\ j \in \{1, 2, \ldots, k\},\  j \in  S\}\,, \\
 F_{\leq k \backslash  j} & = \{ S\,|\, S \subset \{1, 2, \ldots, k\},\ j \in \{1, 2, \ldots, k\},\ j \notin  S\}\,.
\end{align}
\end{definition}

\begin{definition}
The family of nonempty subsets of the positive integers is
\begin{equation*}
F_\mathbb{N}^* = \{S|S \subset \mathbb{N},\ S \neq \emptyset \}.
\end{equation*}
\end{definition}

\begin{definition}
The family of nonempty subsets of the first $k$ positive integers is
\begin{equation*}
F_{\leq k}^* = \{S|S\subset \{1,\ldots ,k\},\ S \neq \emptyset \}
\end{equation*}
\end{definition}

\begin{definition}
The family of nonempty subsets of the positive integers whose maximum integer is $m$ is
\begin{equation*}
F_{\max = m}^* = \{S|S \subset \mathbb{N},\ S \neq \emptyset,\ \max(S)=m\}.
\end{equation*}
\end{definition}

\section {Proof of Mutiplicative Function Extension (Theorem~\ref{thm:main})}

\begin{proof}

Suppose that $j,\,k \in \mathbb{N}$ and $j \leq k$.  By Lemma~\ref{lem:Qunion} and the definition of Q, 
\begin{align}
Q\left(f_j,\, F_{\leq k}\right) &=\ Q\left(f_j,\, F_{\leq k \ni j}\right) \cdot Q\left(f_j,\, F_{\leq k \backslash j}\right) \nonumber \\ \\
&=\ \frac{\prod\limits_{ S \in  F_{\leq k \ni j}^{\,\text{odd}}} P(f_j,\, S)}{\prod\limits_{ S \in  F_{\leq k \ni j}^{\,\text{even}}} P(f_j,\, S)} \cdot \frac{\prod\limits_{ S \in  F_{\leq k \backslash j}^{\,\text{odd}}} P(f_j,\, S)}{\prod\limits_{ S \in  F_{\leq k \backslash j}^{\,\text{even}}} P(f_j,\, S)}\,\,. \label{eq:QN}
\end{align}
If a set $ S$ contains the integer $j$, and $j$ is removed, its cardinality changes: from odd to even, or from even to odd.
Therefore, Lemma~\ref{lem:erasek}  gives 
\begin{equation}
\prod\limits_{ S \in  F_{\leq k \ni j}^{\,\text{odd}}} P(f_j,\, S) = \prod\limits_{ S \in  F_{\leq k \backslash j}^{\,\text{even}}} P(f_j,\, S)
\end{equation}
and
\begin{equation}
\prod\limits_{ S \in  F_{\leq k \ni j}^{\,\text{even}}} P(f_j,\, S) = \prod\limits_{ S \in  F_{\leq k \backslash j}^{\,\text{odd}}} P(f_j,\, S)
\end{equation}

This provides compete cancellation in Eq.~\ref{eq:QN}, so that
\begin{equation} \label{eq:Qempty1}
Q\left(f_j,\, F_{\leq k} \right) \,\,=\,\, 1\,,\,\,j\leq k\,.
\end{equation}

\begin{remark}
At this point, we must confront a delicate issue.  The Riemann Rearrangement Theorem says that an infinite series can give different results if the order of addition of its terms is changed, unless the series is absolutely convergent.  Mindful that an infinite product can be converted into an infinite sum by taking its logarithm, we must therefore consider the order in which the factors of Q in Eq.~\ref{eq:Qempty1} are to be multiplied. 
\end{remark} 

When we form $f = \prod\limits_{j=1}^\infty f_j$\,,\ we form each partial product by multiplying the previous partial product by $f_{j+1}$, i.e. by what we obtain from $f_j$ after we increment $j$ by $1$.\ \  Eq.~\ref{eq:Qempty1}, with its requirement $j\leq k$, causes us to ask:  What sets must be added to $F_{\leq k}$ to form $F_{\leq (k+1)}$?  The answer is given by
\begin{equation*}
F_{\leq (k+1)} = F_{\leq k} \cup F_{\max =( k+1)}\,.
\end{equation*} 
By iteration, this leads to the formula
\begin{equation}
F_{\leq k} = \emptyset \cup   \left(\bigcup_{\,m=1}^{\,k}\ F_{\max =m}^*\right)\,.
\end{equation}
We note that, since $| \emptyset |= 0$ is even, $P(f_k,\,\emptyset)$ is in the denominator of Q.  Using Eq.~\ref{eq:Qempty1}, Lemma~\ref{lem:Qunion},  and Lemma~\ref{lem:emptysetf},
\begin{align*}
1 &= Q\left(f_j,\, F_{\leq k} \right) \\
&= Q\left(f_j,\, \emptyset\ \cup \left(\bigcup_{\,m=1}^{\,k}\ F_{\max =m}^*\right)\right) \\
&=  Q(f_j,\, \emptyset )\ Q\left(f_j,\, \bigcup_{\,m=1}^{\,k}\ F_{\max =m}^*\right) \\
&=\frac{\prod\limits_{m=1}^k\ Q(f_j,\,F_{\max = m}^*)}{P(f_j,\,\emptyset)} \\
&= \frac{\prod\limits_{m=1}^k\ Q(f_j,\,F_{\max = m}^*)}{f_j}\,,
\end{align*}
or
\begin{equation} \label{eq:fkQmax}
f_j = \prod_{m=1}^k\ Q(f_j,\,F_{\max = m}^*)\,.
\end{equation}

We want to use Eq.~\ref{eq:fkQmax} to assemble $f = \prod\limits_{j=1}^\infty f_j$\,, but the requirement $j\leq k$ requires us to simultaneously take the limit $k \rightarrow \infty$\,, producing
\begin{align}
f(z) &= \prod_{j=1}^\infty\ f_j \\
&= \prod_{j=1}^\infty\ \prod_{m=1}^\infty\ Q(f_j,\, F_{\mathbb{N},\, max=m}^*) \\
&= \prod_{m=1}^\infty\ Q(f,\, F_{\mathbb{N},\, max=m}^*) \\
&= Q(f,\, F_\mathbb{N}^*)\ _{(GEO)}\,.
\end{align}
\end{proof}

Theorem~\ref{thm:main} is proved.

If we express the theorem in terms of $F_{\mathbb{N}}$ by using Lemma~\ref{lem:emptyset}, we obtain
\begin{equation}
\label{eq:prettiest}
\boxed{ Q\left(f,\, F_{  \mathbb{N}} \right)\ _{(GEO)}\ =\ 1}
\end{equation}

We have boxed Eq.~\ref{eq:prettiest} because this ``universal'' relationship between analytic functions and all finite subsets of $\mathbb{N}$  is perhaps the most concise and most meaningful statement of our function extension method.

\begin{remark}
Nowhere in the proof of Theorem~\ref{thm:main} was it necessary to specify values for the reciprocal geometric ratios $r_k$.
The quotient $Q\big (f(z),\, F_{  \mathbb{N}}^*\big )$ on the righthand side of Eq.~\ref{eq:quotient} is independent of the $r_k$.
As the $r_k$ vary, the function evaluation points compress toward or expand away from the origin, but the quotient of function values is invariant.  We call this property of analytic functions \uline{\textit{elastic invariance}}.
\end{remark}

\section{Aspects of Mutiplicative Function Extension}
\begin{enumerate}
\item The points where the function is evaluated are independent of the function.
\item The function evaluation points are proportional to $z$.  This gives the method a scaling property.  If we take the function evaluation points for calculating $f(z)$, double them, and form the quotient of function products for the doubled points, we obtain $f(2z)$, the value of the function twice as far from the origin.
\item The requirement $|r_k|>1$ means that the geometric ratio $1/|r_k| <1$, assuring convergence of the $k^{th}$ geometric sequence.  We require $f(0)=1$ for infinite product convergence ~\cite{Remmert}.
\item To calculate $f(z)$,\,Theorem~\ref{thm:main} multiplies or divides (when $|S|$  is odd or even, respectively) values of the function at points that are  no further from the origin than $z$ and converge to the origin.\ \ The theorem continues a function ``outward'' from geometric sequence points nearer the origin, because as $n_k \rightarrow \infty$\, the argument of the function approaches
\begin{equation*}
\lim_{n_k \rightarrow \infty}\left( \prod_{\substack{(\forall k \in  S)}}\ \frac{[(r_k)^k -1]^{1/k}}{(r_k)^{n_k}}\right) \cdot z\  = \ 0.
\end{equation*}
If $r$ is real, then every function evaluation point is closer to the origin than $z$ because
\begin{equation*}
\left( \prod_{\substack{(\forall k \in  S)}}\ \frac{[(r_k)^k -1]^{1/k}}{(r_k)^{n_k}}\right) \cdot z\  \ <\ \  \left(\prod_{\substack{(\forall k \in  S)}}\ \frac{[(r_k)^k]^{1/k}}{(r_k)^{n_k}}\right) \cdot z\ \ =\ \ \left(\prod_{\substack{(\forall k \in  S)}}\ \frac{1}{(r_k)^{n_k-1}}\right) \cdot z\ \ < \ \ z\,,
\end{equation*}
since $r_k > 1$ and $n_k \geq 1$.
\item By restriction, the theorem also applies to curves $\mathbb{R} \rightarrow \mathbb{C}$.  The theorem can extend a segment of an analytic curve $\gamma : \mathbb{R} \rightarrow \mathbb{C}$ by \uline{multiplying and dividing together points on the curve}, so long as the segment includes the point $\gamma(0)=1$.   To continue a function from an arbitrary base point $z_0$ where $f(z_0)\neq 1$, one would apply our method to the function $f(z)/f(z_0)$ if $f(z_0)\neq 0$, or to $1+f(z)$ if $f(z_0)=0$.

\item We conjecture that our method could also extend a smooth curve through a Lie group, since elements of a Lie group can also be multiplied and ``divided'' (i.e., multiplied by inverse elements)~\cite{Lee}.

\item The requirement  that  $\Re\left((r_k)^k\right) \geq 1/2$, ensures that, when $r$ is complex, $(r_k)^k-1$ has a real part that is less than or equal to the real part of $(r_k)^k$, so that every function evaluation point is no further from the origin than $z$, i.e., lies where $f$ is assumed to be analytic.  This requirement is not necessary if the function $f$ is analytic everywhere in $\mathbb{C}$.
\item If a function is Taylor-Analytic, it is Multiplicatively-Smooth (i.e., satisfies Theorem~\ref{thm:main}).  But not every Multiplicatively-Smooth function is Taylor-Analytic.
Multiplicative-Smoothness for $f$ requires Taylor-Analyticity for $\ln (f)$; however this does not imply Tayor-Analyticity for $f$ itself. The function $f(x)=1-\exp(-1/x^2)$, with $f(0)$ defined to be 1, is Multiplicatively-Smooth.  But it is is not Taylor-Analytic at the origin because all of its derivatives are zero at $z=0$~\cite{Randol}.  Its Taylor-MacLauren Series calculates that $f(z)=0$ everywhere, which is of course not true.  In this sense, ``has elastic invariance'' might serve as a more inclusive definition of function smoothness than ``has a power series''.
\end{enumerate}

\section{Convergence}
We show that the product of infinite products on the righthand side of Eq.~\ref{eq:quotient} converges for all functions $f$ that satisfy the requirements of Theorem~\ref{thm:main}.

We use the fact that the product on the righthand side of equation Eq.~\ref{eq:quotient} is invariant under changes to the $r_k$.  In particular, it is invariant if we take the limit $r_k\rightarrow \infty$.  We note that
\begin{align*}
\lim\limits_{r_k \rightarrow \infty} \frac{\left[(r_k)^k - 1)\right]^{1/r_k}}{(r_k)^{n_k}}\  &=\  \lim\limits_{r_k \rightarrow \infty}\   \frac{r_k}{(r_k)^{n_k}}\\
   &=
\begin{cases}
1 & \text{if}\ n_k = 1,\\
0 & \text{if}\ n_k > 1.
\end{cases}
\end{align*}
When $r_k \rightarrow \infty$ every factor in the righthand side of Eq.~\ref{eq:quotient} that has $n_k > 1$ contributes a factor of $1$ to the product, because $f(0)=1$.  And every factor on the righthand side of Eq.~\ref{eq:quotient} that has $n_k = 1$ (i.e., $\forall k \in S$) contributes a factor of $f(z)$.

As illustrated in Eq.~\ref{eq:explicit},  in $ F_{\mathbb{N},\, max=m}^*$ with $m > 2$ the number of subsets with $|S|$ odd is equal to the number of subsets with $|S|$ even.  When $m=1$ there is only one subset, namely $\{1\}$. 
  This means that when $r_k\rightarrow \infty\ \forall k$ the righthand side of Eq.~\ref{eq:quotient} reduces to
\begin{align*}
&  \prod_{m=1}^\infty \,\,\  \prod_{S \in  F_{\mathbb{N},\, max=m}^*} \,\lim\limits_{r_k\rightarrow \infty, \forall k} \,\,  \prod_{\substack{(\forall k \in  S) \\
n_k = 1
}}^\infty\  \left[ f\left(\left[\prod\limits_{k\in S}\frac{[(r_k)^k-1]^{1/k}}{(r_k)^{n_k}}\right] \cdot z \right)\right]^{\big{(-1) ^{\,| S|-1 }}} \\
=\ \ \ &f(z) \times \frac{f(z)}{f(z)} \times \frac{f(z) f(z)}{f(z) f(z)} \times \frac{f(z) f(z) f(z) f(z)}{f(z) f(z) f(z) f(z)} \times \cdots \\
=\ \ \ &f(z)
\end{align*}
Since the righthand side of Eq.~\ref{eq:quotient} is independent of the $r_k$, we have shown that Eq.~\ref{eq:quotient} is valid for all functions $f$ that meet the requirements of Theorem~\ref{thm:main}, and that its value is $f(z)$, not $\pm \infty$.  This infinite product of infinite products converges.

\section{Truncated Multiplicative Function Extensions and Other Results}
\subsection{A Simplification}
The following Corollary simplifies practical calculations of Mutiplicative Function Extensions.
\begin{corollary}
If the same geometric ratio is used for every positive integer, $\forall k\ \, r_k =r$, in a Multiplicative Function Extension, then
\begin{equation}
\label{eq:factors}
P(f,\, S,\,z) \,\,=\,\,\,\,\prod_{n=| S|}^\infty\,\left[\,f\left(\,\left[\prod_{k\in S}\,(r^k-1)^{1/k}\right]\frac{z}{r^n}\right)\,\right]^{\,\,\textrm{\large{$\binom{n-1}{| S|-1}$}}}\,.
\end{equation}
\end{corollary}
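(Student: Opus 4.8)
The plan is to substitute the common ratio $r_k = r$ directly into the definition of the geometric sampling product $P(f, S, z)$ and then regroup the resulting multi-index infinite product according to the value of the exponent sum. First I would observe that when $r_k = r$ for every $k$, the sampling point becomes
\[
\prod_{k\in S} X_{k,n_k} = \left[\prod_{k\in S}(r^k-1)^{1/k}\right]\frac{1}{r^{\sum_{k\in S} n_k}},
\]
so that the argument of $f$ in each factor of $P(f,S,z)$ depends on the tuple $(n_k)_{k\in S}$ only through the single integer $n := \sum_{k\in S} n_k$. Writing $C_S := \prod_{k\in S}(r^k-1)^{1/k}$, every factor whose indices satisfy $\sum_{k\in S} n_k = n$ is the identical quantity $f(C_S z / r^n)$.

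Next I would collect the factors of $P$ by the value of $n$. Since each $n_k$ ranges over $\{1,2,\ldots\}$, the smallest attainable value of $n$ is $|S|$ (all $n_k=1$), and $n$ then ranges over every integer $\geq |S|$. The exponent to which $f(C_S z/r^n)$ is raised equals the number of tuples $(n_k)_{k\in S}$ of positive integers with $\sum_{k\in S} n_k = n$. By a standard stars-and-bars count, the number of compositions of $n$ into exactly $|S|$ positive ordered parts is $\binom{n-1}{|S|-1}$, which is precisely the claimed exponent; this yields the displayed formula.

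The one point requiring care is the rearrangement of an infinite product, exactly the delicate issue already flagged in the proof of Theorem~\ref{thm:main}. To license the regrouping I would verify absolute convergence of the logarithm series $\ln P(f,S,z) = \sum \ln f(C_S z/r^n)$ taken over all tuples. Because $f$ is analytic with $f(0)=1$, near the origin $\ln f(w) = O(|w|)$, so the term attached to a tuple with sum $n$ is $O(|r|^{-n})$, while the number of such tuples is $\binom{n-1}{|S|-1} = O(n^{|S|-1})$, growing only polynomially. Hence the grouped series $\sum_{n\geq |S|} \binom{n-1}{|S|-1}\,\ln f(C_S z/r^n)$ is dominated by $\sum_n n^{|S|-1}|r|^{-n} < \infty$ since $|r|>1$; the logarithm series therefore converges absolutely and the rearrangement is valid. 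The combinatorial step is routine, so this convergence bookkeeping is the only real obstacle, and it is mild because the geometric decay dominates the polynomial multiplicity.
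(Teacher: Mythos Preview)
Your argument is correct and follows exactly the paper's own proof: consolidate the indices into the single sum $n=\sum_{k\in S}n_k$, observe the argument of $f$ depends only on $n$, and count the compositions of $n$ into $|S|$ positive parts to obtain the binomial exponent. You additionally supply an absolute-convergence justification for the rearrangement that the paper's terse proof omits.
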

\begin{proof}
This result consolidates all $| S|$ exponents $n_k$, for $k \in S$, into a single integer $n$.  It recognizes that $\prod\limits_{k \in S}1/r^{n_k} = 1/r^{\left(\sum\limits_{k \in S} n_k \right)} = 1/r^n$.  The binomial-coefficient exponent for each $f$ value is the number of ways in which $| S|$ positive integers can add up to $n$\ \ \cite{MacMahon}; it is the multiplicity of the factor of $f$ for the sequence point that has $n$ factors of $r$ in its denominator.  The index $n$ starts at  $|S|$ because each of the $|S|$ exponents $n_k$ starts at $1$.
\end{proof}
The function $f$ is evaluated on a single geometric sequence for each set of positive integers $S$.
This reduces the number of points at which the function must be evaluated.  It improves the performance of real-time engineering applications, in which each factor to be multiplied is obtained by measuring the magnitude of a smooth signal, rather than evaluating a smooth function.  One such application, predicting the future value of a smooth signal from previous geometric samples, is described in Ref.~\cite{Miller}.
\subsection{Truncation of Multiplicative Function Extensions}
In practice, we can only perform finite calculations.  This requires us to truncate function extensions that are infinite sums or products.  Multiplicative Function Extensions require two truncations.
\begin{enumerate}
\item We limit the number of products $P$, by limiting the integer subsets used.  We use the subsets of the first K positive integers, $S \subset S_{max} = \{1, 2, \ldots, K\}$.
\item We limit the number of points used from each geometric sequence.  With a common $r$-value, we limit the consolidated exponent $n$ to a maximum value of N for each integer subset.
\end{enumerate}
We can then specify a Multiplicative Function Extension truncation as a triplet of values:  $(S_{max}, N, r)$.

\begin{figure}[h!]
\begin{center}
\subfigure{
\includegraphics[scale=.23]{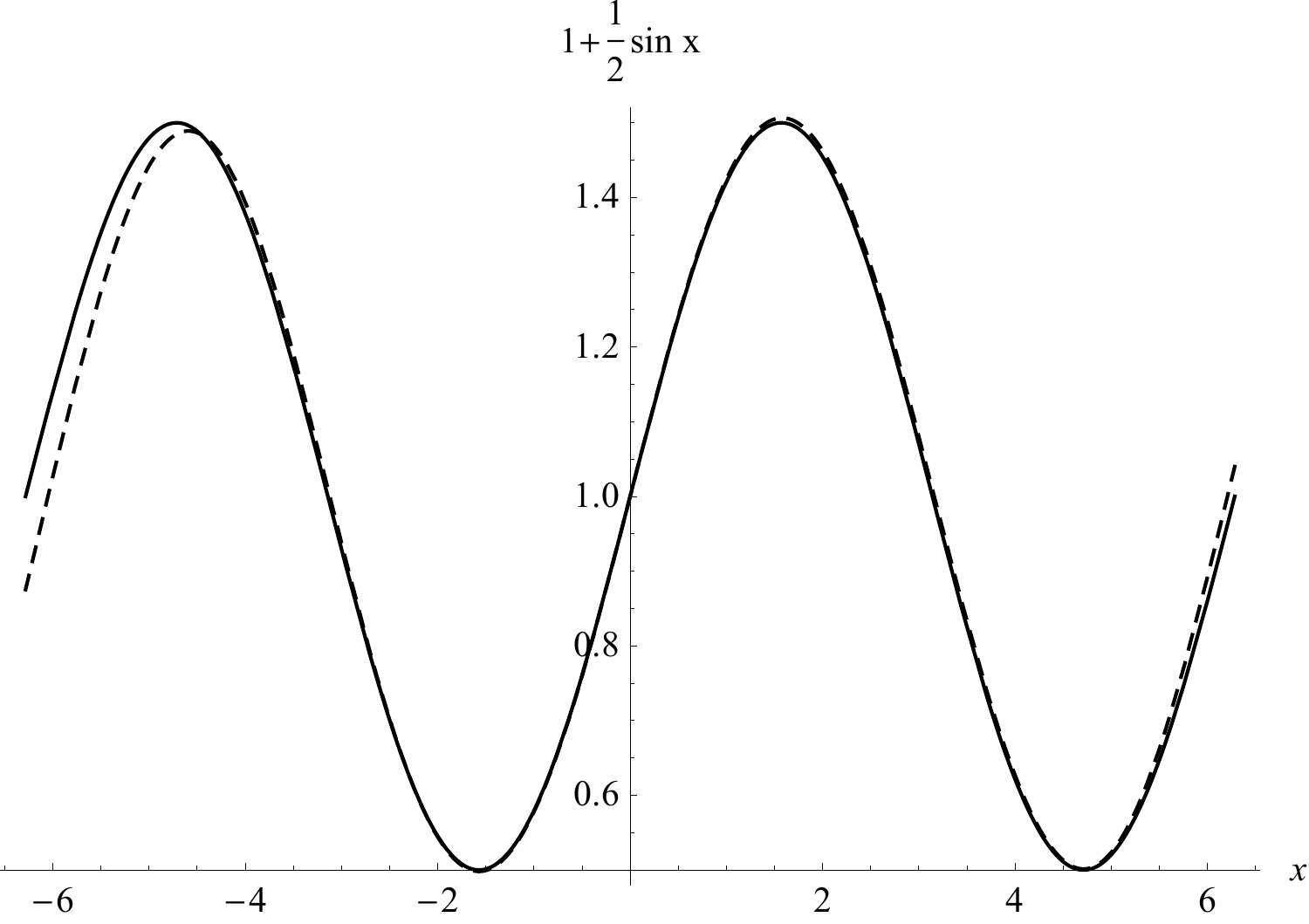}}
\subfigure{
\includegraphics[scale=.23]{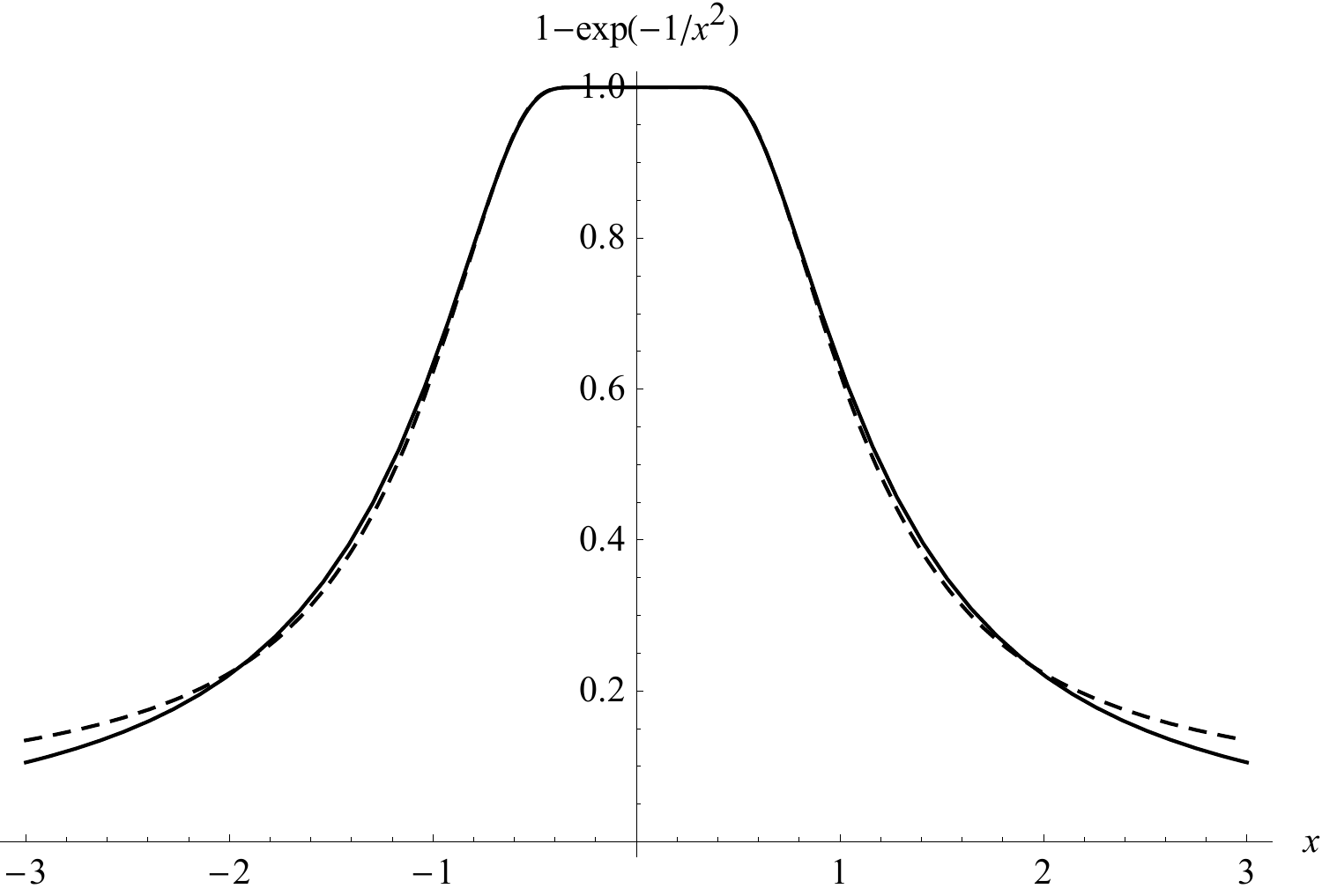}}
\caption {Truncated Multiplicative Function Extensions for (a) $1+\frac{1}{2}\sin(x)$, with $S_{max} = \{1,2,3,4\},\, N=40, \text{ and } r=2$; and (b)~$1~-~e^{-1/{x^2}}$, with $S_{max}=\{2,4,6,8\},\, N=20,\, \text{and } r=2$, based at zero. Solid lines are the truncations; dashed lines are the functions.}
\label{halfsin-nanal}
\end{center}
\end{figure}

\subsection{Graphs of Truncations for Two Elementary Real Functions}
Figure \ref{halfsin-nanal}(a) shows a $(\{1,2,3,4\},40,2)$ Multiplicative Function Extension for the function $1+\frac{1}{2}\sin(x)$. Increasing the cutoff $S_{\,\text{max}}$ gives more oscillations to this graph. As with cutoff MacLauren Series, the further away that the target point $z$ is from the base point (the origin), the greater the error introduced by the cutoff.

Figure \ref{halfsin-nanal}(b) illustrates a $(\{2,4,6,8\},20,2)$ Multiplicative Function Extension for $1-~e^{-1/x^2}$. As Eq.~\ref{eq:taylog} makes clear, an even function only has even-order components, and therefore only requires even integers in the sets that generate its geometric sequences.

\subsection{Truncation Error}
The following theorem quantifies the error introduced by truncation.  We omit its proof, which is lengthy and tedious.
\begin{theorem} (Truncation Error)
\begin{align} \label{eq:remain}
f(z) \,\,\approx\,\, Q_N&\left(f,\,F_{\leq K}^*\right)\, \cdot\, Q\left(f\left(\frac{z}{r^N}\right),\,F_{\leq K}^*\right)\, \cdot\, Q\left(f_{>K},\,F_{\, >K}^*\right)\,, \\
where\ \ Q_N\left(f,\,F\right) \,\,&:=\,\, \frac{\prod\limits_{S \in F,\,\,|S|\, \textrm{is odd}}\,P_N\left(f,\,S\right)}{\prod\limits_{S \in F,\,\,|S|\, \textrm{is even}}\,P_N\left(f,\,S\right)}\,, \nonumber \\
P_{N}(f,\,S) \,\,&:=\,\, \prod_{\ldots,\,n_j,\,\ldots\, =\, 1}^{[\,\,N/|S|\,\, ]}\,f\left(\,\left[\,\prod_{j\,\in\,S}\,\frac{(r^j-1)^{1/j}}{r^{n_j}}\right]\,z\right)\,, \nonumber \\
F_{\leq K}^*\,\, &:=\,\, \{S\,|\,S \subset \{1, 2, \ldots, K\},\,S\neq \emptyset,  \}\,, \nonumber \\
F_{>K}^*\,\, &:=\,\, \{S\,|\,S \subset \{K+1, K+2, \ldots, \infty\},\,S\neq \emptyset, \,\,|S|<\infty \}\,, \nonumber \\
and\ \ f_{> K} \,\,&:=\,\, \prod_{j=K+1}^\infty\, f_j\,. \nonumber
\end{align}
\end{theorem}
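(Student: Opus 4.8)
The plan is to start from the exact extension $f(z)=Q(f,\,F_{\mathbb{N}}^*)$ of Theorem~\ref{thm:main} and to regard the computed quantity $Q_N(f,\,F_{\leq K}^*)$ as this exact product subjected to two independent modifications: the index family is shrunk from $F_{\mathbb{N}}^*$ to $F_{\leq K}^*$, and each infinite product $P$ is replaced by its finite version $P_N$. I would factor the total error $f/Q_N(f,\,F_{\leq K}^*)$ into an integer-cutoff correction and a sequence-tail correction, identify the leading part of each with the third and second factors of Eq.~\ref{eq:remain} respectively, and then bound what is discarded. Every rearrangement below is legitimate because the requirement $f(0)=1$ forces all arguments $C_S z/r^n$ toward the origin, where $f\to 1$ and the infinite products converge.

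First, the integer-cutoff correction. Write $f=f_{\leq K}\cdot f_{>K}$ with $f_{\leq K}=\prod_{j=1}^K f_j$, and use Lemma~\ref{lem:Qprod} and Lemma~\ref{lem:Qunion} to split $Q(f,\,F_{\mathbb{N}}^*)$ over three disjoint subfamilies of $F_{\mathbb{N}}^*$: the low family $F_{\leq K}^*$, the high family $F_{>K}^*$, and the mixed family of sets meeting both $\{1,\dots,K\}$ and $\{K+1,K+2,\dots\}$. Combining the cancellation Eq.~\ref{eq:Qempty1}, $Q(f_j,\,F_{\leq K})=1$ for $j\leq K$, with the reasoning of Lemma~\ref{lem:emptyset} applied to $F_{\leq K}$ gives $Q(f_j,\,F_{\leq K}^*)=f_j$, hence $Q(f_{\leq K},\,F_{\leq K}^*)=f_{\leq K}$; this is the part reproduced exactly. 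The principal leftover is $Q(f_{>K},\,F_{>K}^*)$, the third factor of Eq.~\ref{eq:remain}. The remaining blocks---high factors evaluated on low sets, low factors on high sets, and everything on the mixed family---are what the approximation discards, and I would bound each by noting that its $f$-arguments are of order $r^{-K}$ or smaller, so every such value is $1+O(r^{-K})$.

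Next, the sequence-tail correction, handled set by set for $S\in F_{\leq K}^*$. The common-ratio Corollary, Eq.~\ref{eq:factors}, consolidates $P(f,\,S)$ into a single product $\prod_{n=|S|}^\infty [f(C_S z/r^n)]^{\binom{n-1}{|S|-1}}$ with $C_S=\prod_{k\in S}(r^k-1)^{1/k}$. Splitting at $n=N$, the head is $P_N(f,\,S)$ and the tail runs over $n>N$. Reindexing $n\mapsto n+N$ and invoking the scaling property---that multiplying every evaluation point by $r^{-N}$ returns the extension of the rescaled function $z\mapsto f(z/r^N)$---identifies the tail, to leading order, with $P(f(z/r^N),\,S)$. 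Reassembling heads and tails across $F_{\leq K}^*$ via Lemma~\ref{lem:prodf} and Lemma~\ref{lem:Qprod} yields the first two factors $Q_N(f,\,F_{\leq K}^*)$ and $Q(f(z/r^N),\,F_{\leq K}^*)$.

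The hard part---and the reason Eq.~\ref{eq:remain} is an approximation rather than an identity---is quantifying these ``to leading order'' steps, where two structural mismatches must be shown negligible. First, the box-shaped cutoff defining $P_N$ (each $n_j\leq[N/|S|]$) is not the simplex cutoff $\sum_{k\in S}n_k\leq N$ of the consolidated head, so the two heads differ in a thin band of exponents near $n=N$. Second, the genuine tail carries binomial weights $\binom{n-1}{|S|-1}$ whereas $P(f(z/r^N),\,S)$ carries the shifted weights $\binom{n-N-1}{|S|-1}$, so the tails agree only approximately. Both discrepancies, together with the cross-families discarded above, involve only arguments of size $O(r^{-\min(K,N)})$, where $f=1+O(r^{-\min(K,N)})$; the plan is to take logarithms and bound the resulting error by a convergent sum that tends to $0$ as $K,N\to\infty$, so the product of the three named factors tends to $f(z)$. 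Carrying out this estimate uniformly over the finitely many $S\in F_{\leq K}^*$ is precisely the lengthy, tedious bookkeeping the paper elects to omit.
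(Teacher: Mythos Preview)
The paper explicitly omits the proof of this theorem (``We omit its proof, which is lengthy and tedious''), so there is no argument to compare against. Your overall architecture---separate the $K$-cutoff from the $N$-cutoff, identify the leading correction for each with one of the named factors, and estimate what remains---is the natural one and almost certainly what the author has in mind. Your treatment of the $N$-tail is on target: you correctly pinpoint the two genuine sources of approximation (box versus simplex truncation in $P_N$, and the shift in the binomial exponents after reindexing), and the paper's own remark that equality is exact when $|S|$ divides $N$ for every $S\in F_{\leq K}^*$ confirms that these are the only obstructions on the $N$ side.

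There is, however, a slip in your $K$-cutoff bookkeeping and a wrong justification. The first factor of the theorem is $Q_N(f,\,F_{\leq K}^*)$ with the \emph{full} $f$, not with $f_{\leq K}$; hence the block ``high factors on low sets,'' $Q(f_{>K},\,F_{\leq K}^*)$, is already contained in the first factor and is not discarded. More importantly, your stated reason for the discarded blocks being negligible---that ``its $f$-arguments are of order $r^{-K}$ or smaller''---is false. For a set $S\in F_{>K}^*$ with every $n_k=1$, the sample point $\prod_{k\in S}(r^k-1)^{1/k}/r$ has modulus close to $1$, not to $r^{-K}$. The actual mechanisms are different: for $j\leq K$ one has $Q(f_j,\,F_{\leq K}^*)=f_j$ exactly, so by Lemma~\ref{lem:Qsubsetminus} the entire contribution of $f_{\leq K}$ over $F_{\mathbb{N}}^*\setminus F_{\leq K}^*$ cancels \emph{identically}; and for $j>K$ the computation $P(f_j,\,S)=\exp\bigl(c_j z^j\prod_{k\in S}(r^k-1)^{j/k}/(r^j-1)\bigr)$ shows that the smallness comes from $c_j z^j$ (high-order Taylor coefficients of $\ln f$), not from the sample locations. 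Reworking the $K$-step with these two observations in place of the argument-size claim would make your sketch sound.
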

\begin{remark} \ \ \\
\begin{itemize}
\item $Q\left(f_{>K},\,F_{\, >K}^*\right)$ is the error-factor from truncating the subsets of the positive integers (i.e.number of geometric sequences) to the subsets of the first K positive integers.  The error factor is one (indicating no error) if the function $f$ has no function factors of higher order than $K$.
\item $Q\big (f\left(z/r^N\right),\,F_{\leq K}^*\big )$ is the error-factor from subsequently truncating the number of points per geometric sequence to N.  The error factor approaches one (indicating no error) when $N \rightarrow \infty$, because then $f(z/r^N)) \rightarrow f(0) =1$.
\item $Q_N\left(f,\,F_{\leq K}^*\right)$ is the value of the Multiplicative Function Extension after both truncations.
\item The theorem is exact, not approximate, if $N$ is divisible by $|S|$ for all $S \subset F_{\leq K}^*$.
\end{itemize}
\end{remark}

\subsection{Other Results}
We present several other results without proof.

If we sample an arbitrary component $f_j$ on the $k^{th}$ singlet sequence, and let $r$ approach 1 from above, $r\downarrow 1$, we get a surprisingly simple result.
\begin{proposition}[Function Factor Cutoff Tool] 
Suppose $f(z) = \prod_{k=1}^\infty f_k(z) = \prod_{k=1}^\infty \exp(c_k z^k)$.  Then
\begin{align}
\lim_{r \downarrow 1}\,P(f_j,\,\{k\}) &\,\,=\,\, \label{eq:kcut}
\begin{cases}
f_k & \textrm{if $j = k$}\,,\\ 
1 & \textrm{if $j>k$ \textrm{or} $c_j = 0$}\,, \\
0 & \textrm{if $j<k$ \textrm{and} $c_j<0$}\,, \\
\infty & \textrm{if $j<k$ \textrm{and} $c_j>0$} .
\end{cases}
\end{align}
\end{proposition}

The $j>k$ case of Eq.~\ref{eq:kcut} allows us to calculate a single component $f_k$, by nullifying the effect of lower-order samplings of $f_j$.
\begin{theorem}[Isolating One Function Component Factor]
The $k^{th}$ functional component factor of $f$ is given by:
\begin{equation}
f_k = \lim\limits_{r\downarrow 1} \,\,Q\left(f,\, F_{\mathbb{N},\, max=k}^*\right)\,.  \label{eq:fk}
\end{equation}
\end{theorem}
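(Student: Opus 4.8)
The plan is to reduce everything to the individual function factors $f_j(z)=\exp(c_j z^j)$ and to exploit the closed form that the geometric sampling product takes on them. First I would invoke Lemma~\ref{lem:Qprod} in its infinite-product form to write $Q(f,\,F_{\max=k}^*)=\prod_{j=1}^\infty Q(f_j,\,F_{\max=k}^*)$, exactly as in the proof of Theorem~\ref{thm:main}, so that it suffices to evaluate $\lim_{r\downarrow 1} Q(f_j,\,F_{\max=k}^*)$ for each $j$ and then multiply the results.

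Next I would obtain an explicit formula for $P(f_j,\,S)$ when the common ratio $r$ is used. Since $f_j(w)=\exp(c_j w^j)$ and the sampling product factorizes over the independent indices $n_k$, one gets
\[
P(f_j,\,S)=\exp\Big(c_j z^j \prod_{k\in S} A_{k,j}\Big),\qquad A_{k,j}:=\sum_{n=1}^\infty (X_{k,n})^j=\frac{(r^k-1)^{j/k}}{r^j-1}.
\]
In particular $A_{j,j}=1$, which recovers Lemma~\ref{lem:Invar}. Then, using that every $S\in F_{\max=k}^*$ has the form $\{k\}\cup T$ with $T\subseteq\{1,\dots,k-1\}$ and $(-1)^{|S|-1}=(-1)^{|T|}$, I would take the logarithm of the quotient and factor the resulting subset sum as a product:
\[
\ln Q(f_j,\,F_{\max=k}^*)=c_j z^j\,A_{k,j}\sum_{T\subseteq\{1,\dots,k-1\}}(-1)^{|T|}\prod_{l\in T}A_{l,j}=c_j z^j\,A_{k,j}\prod_{l=1}^{k-1}\big(1-A_{l,j}\big).
\]

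Finally I would take $r\downarrow 1$ case by case, using the asymptotics $r^k-1\sim k(r-1)$ that drive the Function Factor Cutoff Tool (Eq.~\ref{eq:kcut}): $A_{l,j}\to 0$ for $l<j$, $A_{j,j}=1$ exactly, and $A_{l,j}\to\infty$ for $l>j$. For $j<k$ the product above contains the exact factor $1-A_{j,j}=0$, so $Q(f_j,\,F_{\max=k}^*)=1$ identically, as already forced by Eq.~\ref{eq:Qempty1}. For $j>k$ the prefactor $A_{k,j}\to 0$ while $\prod_{l=1}^{k-1}(1-A_{l,j})\to 1$, so the limit is $1$. For $j=k$ we have $A_{k,k}=1$ and $\prod_{l=1}^{k-1}(1-A_{l,k})\to 1$, giving $\exp(c_k z^k)=f_k$. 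Multiplying over $j$, the surviving factor is $f_k$, which is the claim.

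The main obstacle is justifying the interchange of $\lim_{r\downarrow 1}$ with the infinite product over $j$, and in particular controlling the tail $j>k$ uniformly: each tail factor tends to $1$, but to conclude the tail product tends to $1$ one must bound $\sum_{j>k}\big|c_j z^j\,A_{k,j}\big|$ uniformly for $r$ near $1$. I expect this to follow from the analyticity of $\ln f$ near the origin (which controls the growth of the $c_j$) together with the estimate $A_{k,j}=O\!\big((r-1)^{j/k-1}\big)\to 0$, but it requires the same absolute-convergence care flagged in the proof of Theorem~\ref{thm:main}.
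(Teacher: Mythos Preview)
The paper does not actually prove this theorem: it is listed under ``Other Results'' with the explicit caveat ``We present several other results without proof.'' All the paper offers is the preceding Function Factor Cutoff Tool (Lemma, Eq.~\ref{eq:kcut}) together with the one-line remark that ``the $j>k$ case of Eq.~\ref{eq:kcut} allows us to calculate a single component $f_k$, by nullifying the effect of lower-order samplings of $f_j$.'' So there is no detailed argument in the paper to compare against.

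Your proposal is essentially the proof the paper gestures at, only carried out in full. You do what the paper implicitly intends---split $Q(f,F_{\max=k}^*)$ into the factors $Q(f_j,F_{\max=k}^*)$ via Lemma~\ref{lem:Qprod} and then analyze each factor as $r\downarrow 1$---but you go further by deriving the explicit closed form
\[
\ln Q(f_j,\,F_{\max=k}^*)=c_j z^j\,A_{k,j}\prod_{l=1}^{k-1}(1-A_{l,j}),\qquad A_{l,j}=\frac{(r^l-1)^{j/l}}{r^j-1},
\]
which makes all three cases ($j<k$, $j=k$, $j>k$) transparent at once. In particular, your observation that the factor $1-A_{j,j}=0$ forces $Q(f_j,F_{\max=k}^*)=1$ \emph{identically} for $j<k$ is sharper than what the paper records (the paper's cancellation Eq.~\ref{eq:Qempty1} is for $F_{\le k}$ rather than $F_{\max=k}^*$, though the same pairing argument works), and your treatment of $j>k$ subsumes the Cutoff Tool for non-singleton $S$, which the paper's lemma does not literally cover. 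The concern you raise about interchanging the limit $r\downarrow 1$ with the infinite product over $j>k$ is real and is exactly the sort of absolute-convergence issue the paper itself flags (and does not resolve) in the proof of Theorem~\ref{thm:main}; your sketch of the bound $A_{k,j}=O\big((r-1)^{j/k-1}\big)$ combined with the analyticity of $\ln f$ is the right ingredient, and is more than the paper provides.
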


Using this result to isolate $f_1$, we can calculate the derivative of a function as the limit of an infinite product.
\begin{theorem}[Product Derivative]
 If $f\text{:\ }  \mathbb{C}\rightarrow\mathbb{C}$ is analytic and nonzero in an open star-shaped neighborhood $\mathbb{U}$ of $z$, then
\begin{equation}
\label{eq:prodderiv}
f'(z) \,= \,\, \frac{f(z)}{\Delta z} \cdot\, \lim\limits_{r \downarrow 1} \,\,\log  \,\prod_{n=1}\limits^\infty \left[ f\left(z + \frac{r-1}{r^n} \, \Delta z\right) / f(z) \right]
\end{equation}
for any nonzero $\Delta z$ with $z + \Delta z \in\mathbb{U}$.
\end{theorem}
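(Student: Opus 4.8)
The plan is to reduce this statement to the already-established isolation formula, Eq.~\ref{eq:fk}, taken with $k=1$, but applied not to $f$ itself rather to the shifted and normalized function
\[
g(w)\;=\;\frac{f(z+w)}{f(z)}.
\]
Since $f$ is analytic and nonzero in $\mathbb{U}$ and $z\in\mathbb{U}$, the function $g$ is analytic and nonzero on a neighborhood of $w=0$, with $g(0)=1$; hence $g$, read as a function of the variable $w$, satisfies the hypotheses required by Theorem~\ref{thm:main} and its corollaries. This is exactly the device noted for continuing a function from a base point where it does not equal $1$.

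First I would identify the first function factor of $g$. Writing $\ln g(w)=\ln f(z+w)-\ln f(z)$ and differentiating at $w=0$ shows that the coefficient of $w$ in the power series of $\ln g$ is $c_1^{(g)}=f'(z)/f(z)$, so that
\[
g_1(w)\;=\;\exp\!\left(\frac{f'(z)}{f(z)}\,w\right).
\]
Next I would invoke Eq.~\ref{eq:fk}. Because the only nonempty subset of $\mathbb{N}$ whose maximum element is $1$ is $\{1\}$, we have $F_{\max=1}^*=\{\{1\}\}$, and since $|\{1\}|=1$ is odd the quotient $Q$ collapses to the single product $Q(g,F_{\max=1}^*)=P(g,\{1\})$. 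Expanding $P(g,\{1\},\Delta z)$ through its definition, Eq.~\ref{eq:manyfactors}, with the singlet sampling factor $X_{1,n}=(r-1)/r^{n}$, gives
\[
P(g,\{1\},\Delta z)\;=\;\prod_{n=1}^{\infty} g\!\left(\frac{r-1}{r^{n}}\,\Delta z\right)\;=\;\prod_{n=1}^{\infty}\frac{f\!\left(z+\frac{r-1}{r^{n}}\,\Delta z\right)}{f(z)}.
\]

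Combining these, Eq.~\ref{eq:fk} evaluated at $w=\Delta z$ yields
\[
\exp\!\left(\frac{f'(z)}{f(z)}\,\Delta z\right)\;=\;\lim_{r\downarrow 1}\ \prod_{n=1}^{\infty}\frac{f\!\left(z+\frac{r-1}{r^{n}}\,\Delta z\right)}{f(z)},
\]
and taking logarithms and solving for $f'(z)$ produces Eq.~\ref{eq:prodderiv}. The main obstacle is the customary one for logarithms of infinite products: I must justify moving the logarithm inside the limit and fixing its branch. This is settled by the limit itself. For each fixed $n$ the argument $(r-1)/r^{n}\,\Delta z\to 0$ as $r\downarrow 1$, and all such arguments are bounded uniformly in $n$ by $(1-1/r)\,|\Delta z|$, so for $r$ sufficiently close to $1$ every factor lies in a small neighborhood of $g(0)=1$; thus $\log\prod_n=\sum_n\log$ with principal branches, the sum converges to $c_1^{(g)}\Delta z$, and continuity of $\log$ at the nonzero limit value removes any ambiguity. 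It remains only to check that the sample points are admissible: since $0<(r-1)/r^{n}<1$, each point $z+\frac{r-1}{r^{n}}\Delta z$ lies on the segment from $z$ to $z+\Delta z$, and star-shapedness of $\mathbb{U}$ together with $z+\Delta z\in\mathbb{U}$ places this whole segment inside $\mathbb{U}$, where $f$ is analytic and nonzero and where the singlet product converges because $|1/r|<1$.
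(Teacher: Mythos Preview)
Your proposal is correct and follows exactly the approach the paper indicates: the paper presents this theorem without proof but remarks that it follows by ``using this result to isolate $f_1$,'' and your argument carries this out by applying Eq.~\ref{eq:fk} with $k=1$ to the normalized shift $g(w)=f(z+w)/f(z)$, so that $g_1(\Delta z)=\exp\bigl(f'(z)\,\Delta z/f(z)\bigr)$ and $Q(g,F_{\max=1}^*)$ collapses to the singlet product appearing in Eq.~\ref{eq:prodderiv}. Your handling of the logarithm branch and of the admissibility of the sample points via star-shapedness is more careful than anything the paper supplies.
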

We do \uline{not} need to take the limit $\Delta z \rightarrow 0$; taking the limit $r \downarrow 1$ suffices.  It is easy to show that the product derivative obeys the Leibniz product rule.

\section{Prime Functional Identity}
Multiplicative Function Extension relates to number theory when each geometric ratio is chosen to be a common power of the reciprocal of a prime number.

\textbf{Definitions}\ \ In what follows, we use the following definitions.
\begin{itemize}
\item  $P(n)$ is the set of prime factors of $n$,
\item $\omega(n)$ is the number of distinct prime factors of $n$,
\item $\pi (p)$ is the number of primes less than or equal to $p$, so that $\pi(p_k) = k$ if $p_k$ is the $k^{\mathrm{th}}$ prime,
\end{itemize}
\begin{definition} \label{def:GPO}
\uline{Greatest Prime Order} (GPO) is an ordering of groups of the positive integers in which
\begin{enumerate}
\item $1$ is the first integer (if present), because it has no prime factors.
\item The next group of integers is  $\{2^{n_1}\}_{n_1=1}^\infty$
\item The next group of integers is  $\{2^{n_1} \cdot 3^{n_2}\}_{n_2=1}^\infty |_{n1=0}^\infty$
\item The next group of integers is  $\{2^{n_1} \cdot 3^{n_2} \cdot 5^{n_3}\}_{n_3=1}^\infty |_{n_2,n_1=0}^\infty$
\item etc.
\end{enumerate}
\end{definition}
We associate $n_1$ with the prime $2$ because $2$ is the $1^{st}$ prime, etc.

\begin{remark}  Greatest Prime Order partitions the positive integers into equivalence classes with the equivalence relation ``have the same greatest prime factor".\ \ GPO orders these equivalence classes, but it does not specify the order of a particular integer within its equivalence class.
\end{remark}

\begin{theorem}
\label{thm:second}
Suppose that $U \subset \mathbb{C}$ is an open disk-shaped neighborhood of the origin and that $z \in U$.  Suppose that the function $f:\mathbb{C} \rightarrow \mathbb{C}$ is analytic and nonzero in $U$ and that $f(0)=1$.\, Let $s \in \mathbb{C}$\, with \ $|2^s| > 1$ and \ $\Re(2^s) \geq 1/2 $.\ \ Let $P(n)$ be the set of prime factors of $n$,\,\ $\omega(n)$ be the number of distinct prime factors of $n$,
$\pi (p)$ be the number of primes less than or equal to $p$, and let $p_m$ be the $m^{th}$ prime so that $m = \pi(p_m)$.\ \ Then
\begin{equation}
f(z)\ \  =\ \ \prod_{m=1}^\infty\ \ 
  \prod_{\substack{n\, \in\, \mathbb{N} \\
\max(P(n))=p_m
}}
\ \left[f\left(\frac{\prod\limits_{p\, \in \,P (n)}\left[p^{\,\,s \cdot \pi (p)} \,\,-1\right]^{1/{\pi (p)}}}{n\,^s}\,\cdot\, z \right)\right]^{\big {(-1)} ^{\,\omega (n) -1 }}\,.
\label{eq:primef}
\end{equation}
\end{theorem}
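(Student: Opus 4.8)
The plan is to deduce Theorem~\ref{thm:second} from Theorem~\ref{thm:main} by the single substitution $r_k = p_k^{\,s}$, where $p_k$ denotes the $k^{\text{th}}$ prime, and then to reindex the resulting triple product through the Fundamental Theorem of Arithmetic. The organizing idea is to identify the index $k$ that labels the geometric sequences in Theorem~\ref{thm:main} with $\pi(p_k)=k$, so that the $k^{\text{th}}$ sequence is attached to the $k^{\text{th}}$ prime. With $r_k = p_k^{\,s}$ one has $(r_k)^k = p_k^{\,s\,\pi(p_k)}$, so the numerator factor $[(r_k)^k-1]^{1/k}$ of Eq.~\ref{eq:quotient} becomes exactly $[\,p_k^{\,s\,\pi(p_k)}-1\,]^{1/\pi(p_k)}$, matching the numerator in Eq.~\ref{eq:primef}, while $(r_k)^{n_k}=p_k^{\,s n_k}$ will furnish the denominator.

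Next I would verify the hypotheses of Theorem~\ref{thm:main} for this choice. The condition $|r_k|>1$ reduces to $p_k^{\,\Re(s)}>1$, which holds for every prime as soon as $\Re(s)>0$; and $\Re(s)>0$ is precisely the stated hypothesis $|2^s|>1$, since $|2^s|=2^{\Re(s)}$. The real-part condition $\Re((r_k)^k)\geq 1/2$ is the delicate point and the main obstacle: Theorem~\ref{thm:main} requires $\Re(p_k^{\,sk})\geq 1/2$ for every $k$, whereas only the instance $\Re(2^s)\geq 1/2$ is assumed. For real $s>0$ each such quantity exceeds $1$ and the condition is automatic, and by Aspect~7 of the Aspects section the condition may be dropped entirely when $f$ is entire. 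I would therefore make explicit which regime is intended, since for genuinely complex $s$ the quantity $\Re(p_k^{\,sk})=p_k^{\,k\Re(s)}\cos(k\,\Im(s)\ln p_k)$ oscillates in sign with $k$ and is not governed by its value at $2$; the clean resolution is to state the identity either for $s$ real with $\Re(s)>0$ or for $f$ analytic on all of $\mathbb{C}$.

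With the hypotheses settled, the substantive step is the reindexing, which is routine once the bijection is written down. For a fixed $m$, I would exhibit the correspondence between pairs $\big(S,\,(n_k)_{k\in S}\big)$ — with $S$ a nonempty subset of $\mathbb{N}$ satisfying $\max(S)=m$ and each $n_k\geq 1$ — and positive integers $n$ with $\max(P(n))=p_m$, given by $n=\prod_{k\in S}p_k^{\,n_k}$; the Fundamental Theorem of Arithmetic guarantees this is a bijection. Under it, $\prod_{k\in S}(r_k)^{n_k}=\prod_{k\in S}p_k^{\,s n_k}=n^{\,s}$ produces the denominator $n^s$, the identity $|S|=\omega(n)$ converts $(-1)^{|S|-1}$ into $(-1)^{\omega(n)-1}$, and $\max(S)=m \Longleftrightarrow \max(P(n))=p_m$ identifies the outer index, so the double product $\prod_{S:\max(S)=m}\prod_{(n_k)}$ collapses into the single product $\prod_{\max(P(n))=p_m}$. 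Finally I would observe that Greatest Element Order on the subsets $S$ transports under this bijection to Greatest Prime Order (Definition~\ref{def:GPO}) on the integers $n$, so the conditional-convergence bookkeeping already established in the proof of Theorem~\ref{thm:main} carries over without change, completing the argument.
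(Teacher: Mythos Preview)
Your proposal is correct and follows the paper's own route exactly: substitute $r_k = p_k^{\,s}$ in Theorem~\ref{thm:main}, reindex the triple product via the Fundamental Theorem of Arithmetic so that $(S,(n_k)_{k\in S})\leftrightarrow n$, and observe that Greatest Element Order on subsets transports to Greatest Prime Order on integers. Your scrutiny of the real-part hypothesis---that $\Re(2^s)\geq 1/2$ does not by itself control $\Re(p_k^{\,sk})$ for $k>1$ when $s$ is genuinely complex---is more careful than the paper, which does not address the point at all; your proposed resolutions (restrict to real $s>0$, or assume $f$ entire so the condition is unneeded) are the sensible ones.
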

\ \newline
\begin{proof}
We choose the reciprocal geometric ratios in Theorem~\ref{thm:main} to be all successive prime numbers raised to a common power:  $r_k=(p_k)^s$\,, where $p_k$ is the k-th prime.  Then we use the fundamental theorem of arithmetic:  for every positive integer $n$ greater than one,\ \ $1/n = 1/ \prod_{k \in S}\, (p_k)^{n_k}$ for some unique $S \subset \mathbb{N}$ and for some unique tuple of exponents $\{n_k\}_{k \in S}$.\  Therefore,
\begin{equation*}
1/\prod_{k \in S}\left(r_k\right)^{n_k} =\ 1/\prod_{k \in S}\left[(p_k)^s\right]^{n_k} =\ 1/\prod_{k \in S}\left[(p_k)^{n_k}\right]^s =\ 1/n^s.
\end{equation*}
In Theorem~\ref{thm:main}, we substitute $S=P (n)$\,.\ \ Then \, $p_k \in P (n)$\, when $k \in S$,\, because $k=\pi (p_k)$\,.

The integer $m$ labels the equivalence classes of Greatest Prime Order for the integers $\{2,3,\ldots,\infty\}$.  The first two products ensure that the integers $n$ appear in Greatest Prime Order.  The index $m$ starts at $2$ because the first prime is  $r_1=2$.

Since the subsets $\{S\,|\,S \subset F_{\mathbb{N}}^*\}$ appear in Greatest Element Order in Theorem~\ref{thm:main},
the combination $n=\prod\limits_{k \in S}{r_k}^{n_k}$ produces the integers in Greatest Prime Order. Greatest $k$ (greatest element) order for $S$ means greatest $r_k$ (greatest prime) order for $n$.
\end{proof}
We see that Theorem~\ref{thm:second} has the same denominator $n^s$ as the Riemann zeta function \cite{Edwards}.

\begin{definition}
We call Eq.~\ref{eq:primef} the \uline{prime functional identity}.
\end{definition}

\begin{definition} \label{def:defmus}
Let $n \in \mathbb{N}$ and $s \in \mathbb{C}$.  We define the \uline{generalized M\"obius function} $\mu^{\star} (n,s)$ to be
\begin{equation*} \label{eq:defmus}
\mu^{\star} (n,s)\, \  =\, \ \frac{ {(-1)} ^{\omega (n) } \cdot \prod\limits_{p\, \in P (n)}\,\left[\, p^{\,\,s \cdot \pi (p)} \,\,-1 \right]^{1/\pi (p)}}{n\,^s}\,,
\end{equation*}
where we define $\mu^{\star}(1,\,s) = 1$.
\end{definition}

Since $1$ has no prime factors, $\omega(1)=0$, and therefore consistency requires the first value to be $\mu^{\star} (1,\,s)=(-1)^0/1^s=1$.

Definition~\ref{def:defmus} is motivated by the following definition and corollary.
\begin{definition}
$\mu(n)$ is the \uline{M\"obius function}:
\begin{align*}
\mu(n) = 
\begin{cases}
(-1)^{\omega(n)} & \text{if n is not divisible by the square of any prime number,}\\
0 & \text{otherwise.}
\end{cases}
\end{align*}
\end{definition}

\begin{corollary} \label{cor:mumus}
\begin{equation}
\lim_{Re(s)\rightarrow \infty} \mu^{\star} (n,s) = \mu (n),
\end{equation}
\end{corollary}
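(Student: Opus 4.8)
The plan is to reduce to the case $n>1$ (the case $n=1$ being immediate, since $\mu^{\star}(1,s)=1=\mu(1)$ by definition and by squarefreeness of $1$) and then to factor $\mu^{\star}(n,s)$ as a product over the distinct primes dividing $n$. Writing $a_p$ for the exponent of $p$ in the prime factorization of $n$, so that $n^s=\prod_{p\in P(n)}p^{a_p s}$, I would distribute the denominator across the numerator product and record
$$\mu^{\star}(n,s)=(-1)^{\omega(n)}\prod_{p\in P(n)}\frac{\left[p^{\,s\,\pi(p)}-1\right]^{1/\pi(p)}}{p^{\,a_p s}}.$$
This isolates the behavior of each prime into a single factor, and the whole argument then comes down to the limit of these factors as $\Re(s)\to\infty$.

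The key computation is that the fractional power exactly unwinds the exponent inside the bracket. For a prime $p\in P(n)$ with $k=\pi(p)$, I would factor $p^{sk}$ out of the bracket and write
$$\left[p^{sk}-1\right]^{1/k}=p^{\,s}\left(1-p^{-sk}\right)^{1/k},$$
so that the $p$-factor becomes $p^{\,(1-a_p)s}\bigl(1-p^{-sk}\bigr)^{1/k}$. This is where the design of $\mu^{\star}$ pays off: the root $1/\pi(p)$ is chosen precisely to cancel the exponent $\pi(p)$ attached to $s$ inside the bracket, leaving a clean power $p^s$ to leading order.

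Taking $\Re(s)\to\infty$ then finishes the argument factor by factor. Since $p\ge 2$ and $k\ge 1$, we have $|p^{-sk}|=p^{-k\Re(s)}\to 0$, hence $\bigl(1-p^{-sk}\bigr)^{1/k}\to 1$. Meanwhile $|p^{(1-a_p)s}|=p^{(1-a_p)\Re(s)}$ tends to $1$ when $a_p=1$ (the exponent is identically zero) and to $0$ when $a_p\ge 2$. Consequently, if $n$ is squarefree every $a_p=1$, each factor tends to $1$, and $\mu^{\star}(n,s)\to(-1)^{\omega(n)}=\mu(n)$; if $n$ is not squarefree some $a_p\ge 2$, that factor tends to $0$ while the others stay bounded, so the product tends to $0=\mu(n)$. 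Either way the limit equals $\mu(n)$.

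The main obstacle is justifying the clean factorization $\left[p^{sk}-1\right]^{1/k}=p^s\bigl(1-p^{-sk}\bigr)^{1/k}$, i.e.\ pinning down the branch of the $1/k$-th root so that $\bigl(p^{sk}\bigr)^{1/k}=p^s$ rather than $p^s$ times a nontrivial $k$-th root of unity. For real $s\to+\infty$ this is transparent: every quantity in sight is a positive real and the principal real root applies, so no winding can occur. For complex $s$ with $\Re(s)\to\infty$, I would argue that because $p^{-sk}\to 0$ the factor $1-p^{-sk}$ remains in a neighborhood of $1$ where the principal root is continuous with limiting value $1$, and that the branch of the root appearing in the definition (inherited from the geometric-sampling construction with $r_k=(p_k)^s$ in Theorem~\ref{thm:main}) is the principal one for which $\bigl(p^{sk}\bigr)^{1/k}=p^s$ on the relevant region. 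With the branch fixed, the remaining estimates are the elementary limits above.
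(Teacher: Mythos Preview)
Your proposal is correct and follows essentially the same approach as the paper: factor $\mu^{\star}(n,s)$ over the distinct primes of $n$, observe that $[p^{s\pi(p)}-1]^{1/\pi(p)}\to p^s$ as $\Re(s)\to\infty$, and compare with $n^s=\prod_{p\in P(n)}p^{a_p s}$ to obtain $(-1)^{\omega(n)}$ in the squarefree case and $0$ otherwise. Your treatment is in fact more careful than the paper's, since you explicitly handle the $n=1$ case and address the branch choice for the $k$-th root, which the paper leaves implicit.
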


\begin{proof}
Let $\Omega_p (n)$ be the greatest number of factors of the prime number $p$ that divide $n$, i.e., the multiplicity of the prime $p$ in the prime factorization of $n$.  Then by the fundamental theorem of arithmetic
\begin{equation*}
n = \prod_{p\, \in P (n)} p^{\Omega_p (n)}.
\end{equation*}
\begin{align*}
\textrm{Therefore}\ \lim_{\Re(s) \rightarrow \infty} \left[\, p^{\,\,s \cdot \pi (p)} \,\,-1 \right]^{1/\pi (p)} &\approx\,\  \left[\, p^{\,\,s \cdot \pi (p)} \right]^{1/\pi (p)} \\
&\approx\,\ p^s.
\end{align*}
Therefore
\begin{align*}
\lim_{\Re(s) \rightarrow \infty}\ \mu^{\star} (n,s)\ &=\ \lim_{\Re(s) \rightarrow \infty}\frac{ {(-1)} ^{\omega (n) } \cdot \prod\limits_{p\, \in P (n)}\,\left[\, p^{\,\,s \cdot \pi (p)} \,\,-1 \right]^{1/\pi (p)}}{n\,^s} \\
\\
&= (-1)^{\omega(n)} \lim_{\Re(s) \rightarrow \infty}\  \prod_{p\, \in P (n)}\,\ \left(\frac{p}{ p^{\Omega_p (n)}}\right)^s \\
&=
\begin{cases}
(-1)^{\omega(n)} & \text{if }\Omega_p(n) = 1 \ \ \text{for every\ } p\, \in P (n) \\
0 & \text{otherwise.}
\end{cases} \\
&= \mu(n)
\end{align*}
\end{proof}

\begin{corollary}
If $\Re(2^s) \geq 1/2$, then\ \ $\ |\mu^{\star} (n,s)| \leq 1$\,.
\end{corollary}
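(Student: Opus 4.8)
The plan is to bound $|\mu^\star(n,s)|$ term by term after splitting it over the primes dividing $n$. By the fundamental theorem of arithmetic $n=\prod_{p\in P(n)}p^{\Omega_p(n)}$, where $\Omega_p(n)\ge 1$ denotes the multiplicity of $p$, so $|n^s|=n^{\Re(s)}=\prod_{p\in P(n)}p^{\Omega_p(n)\Re(s)}$. Since the numerator of $\mu^\star(n,s)$ is already a product over $p\in P(n)$ and $|(-1)^{\omega(n)}|=1$, the modulus factors as
\begin{equation*}
|\mu^\star(n,s)|\;=\;\prod_{p\in P(n)}\frac{\bigl|\,p^{\,s\,\pi(p)}-1\,\bigr|^{1/\pi(p)}}{p^{\,\Omega_p(n)\,\Re(s)}}\,,
\end{equation*}
so it suffices to show that each factor is at most $1$.

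Fix a prime $p$ and write $k=\pi(p)$, $a=\Omega_p(n)$, and $u=p^{sk}$, so that $|u|=p^{k\Re(s)}$. The factor equals $|u-1|^{1/k}/|u|^{a/k}$, hence it is $\le 1$ exactly when $|u-1|\le|u|^{a}$. The hypothesis $|2^s|>1$ forces $\Re(s)>0$, whence $|u|=p^{k\Re(s)}>1$ and $|u|^{a}\ge|u|$ for every $a\ge 1$; it therefore suffices to treat the binding squarefree case $a=1$, where the condition reads $|u-1|\le|u|$. The elementary identity $|u-1|^2=|u|^2-2\Re(u)+1$ gives $|u-1|\le|u|\iff\Re(u)\ge\tfrac12$, so the factor at $p$ is governed precisely by $\Re\bigl(p^{\,s\,\pi(p)}\bigr)\ge\tfrac12$. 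For the first prime $p=2$, where $\pi(2)=1$, this is verbatim the stated hypothesis $\Re(2^s)\ge\tfrac12$; for each remaining prime $p_k$ the same inequality $\Re\bigl(p_k^{\,ks}\bigr)\ge\tfrac12$ is the instance $\Re\bigl((r_k)^k\bigr)\ge\tfrac12$ of the standing hypothesis of Theorem~\ref{thm:main} under the substitution $r_k=(p_k)^s$ that produces the prime functional identity. Multiplying the per-prime bounds then yields $|\mu^\star(n,s)|\le 1$.

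The main obstacle is precisely this last transfer. The modular splitting and the two-line bound $|u-1|\le|u|\iff\Re(u)\ge\tfrac12$ are routine, and higher multiplicities $\Omega_p(n)\ge 2$ only shrink the corresponding factor, so they never threaten the bound. The real content lives in the per-prime inequality $\Re\bigl(p^{\,s\,\pi(p)}\bigr)\ge\tfrac12$: because the argument of $p^{\,s\,\pi(p)}$ rotates with $\pi(p)$, the inequality for $p>2$ must be read off from the hypotheses under which the prime functional identity, and therefore $\mu^\star$, is defined, with the displayed condition $\Re(2^s)\ge\tfrac12$ supplying the base instance $p=2$. Confirming that those standing hypotheses deliver $\Re\bigl(p^{\,s\,\pi(p)}\bigr)\ge\tfrac12$ for every prime factor of $n$ is the step on which the argument turns, and it is where the write-up should be most careful.
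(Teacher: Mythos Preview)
Your argument is essentially the paper's own proof: factor $|\mu^\star(n,s)|$ over the primes dividing $n$, bound each $|p^{\,s\pi(p)}-1|^{1/\pi(p)}$ by $|p^s|$ via $\Re\bigl(p^{\,s\pi(p)}\bigr)\ge\tfrac12$, and then use $\prod_{p\in P(n)}|p^s|\le|n^s|$. The paper's proof is a single displayed line recording exactly these two inequalities without commentary; your version is simply more explicit, and you are right to flag that the per-prime condition for $p>2$ is the standing hypothesis $\Re\bigl((r_k)^k\bigr)\ge\tfrac12$ from Theorem~\ref{thm:main} under $r_k=p_k^{\,s}$, not a consequence of the displayed $p=2$ case alone.
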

\begin{proof}
\begin{equation*}
|\mu(n,s)| = \left(\prod_{p \in P(n)} |( p^{\,s\cdot \pi(p)} - 1)^{1/\pi(p)} | \right) /|n^s|\ \leq \left(\prod_{p \in P(n)} | p^s|\right)/|n^s| \leq 1\, .
\end{equation*}
\end{proof}

\section{Prime Number Identities}
For each function $f$, Theorem~\ref{thm:second} produces a prime number identity. An example:

\begin{theorem}  For every $s \in \mathbb{C}$,
\begin{equation}
\label{eq:primer}
\sum_
{\substack{\ \\
n \in \mathbb{N} \\
(GPO)
}}
\mu^{\star} (n,s) = 0,
\end{equation}
where $\mu^\star (n,s)$ is the generalized M\"obius function defined in Definition~\ref{def:defmus} and (GPO) indicates Greatest Prime Order for the positive integers, defined in Definition~\ref{def:GPO}.
\end{theorem}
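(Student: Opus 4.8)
The plan is to specialize the prime functional identity (Theorem~\ref{thm:second}) to one well-chosen analytic function and to read off the resulting coefficient relation. The natural choice is the exponential $f(z)=e^{z}$: it is analytic and nonzero on all of $\mathbb{C}$ and satisfies $f(0)=1$, so it meets the hypotheses of Theorem~\ref{thm:second}, and, being entire, it removes any concern about evaluation points leaving the domain. Writing $\alpha_n \,=\, \prod_{p\in P(n)}\big[p^{\,s\cdot\pi(p)}-1\big]^{1/\pi(p)}\big/\,n^{s}$ for the coefficient multiplying $z$ inside the $n$-th factor, the identity of Theorem~\ref{thm:second} reads $f(z)=\prod_{n\ge 2,\,(GPO)}\big[f(\alpha_n z)\big]^{(-1)^{\omega(n)-1}}$, the product running over all integers $n\ge2$ in Greatest Prime Order (the integer $1$ is excluded, since it has no prime factors).

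First I would take the principal logarithm of both sides. This is legitimate in Greatest Prime Order because Theorem~\ref{thm:second} already asserts convergence of the product in that order. Using $\ln f(w)=\ln e^{w}=w$, each factor contributes exactly $(-1)^{\omega(n)-1}\alpha_n z$, so that
\begin{equation*}
z \;=\; \sum_{n\ge 2,\,(GPO)}(-1)^{\omega(n)-1}\,\alpha_n\,z .
\end{equation*}
Since $z$ is a common factor of every summand, it may be extracted from the partial sums and hence from the limit; dividing by $z$ (for any $z\neq0$ in $U$) eliminates the $z$-dependence and leaves the pure coefficient relation $\sum_{n\ge2,\,(GPO)}(-1)^{\omega(n)-1}\alpha_n = 1$.

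Next I would translate the summand into the generalized M\"obius function. By Definition~\ref{def:defmus}, $\mu^{\star}(n,s)=(-1)^{\omega(n)}\alpha_n$, whence $\alpha_n=(-1)^{\omega(n)}\mu^{\star}(n,s)$ and $(-1)^{\omega(n)-1}\alpha_n=(-1)^{2\omega(n)-1}\mu^{\star}(n,s)=-\mu^{\star}(n,s)$. Substituting gives $\sum_{n\ge2,\,(GPO)}\mu^{\star}(n,s)=-1$. Finally I would adjoin the $n=1$ term that Theorem~\ref{thm:second} omits; since $\mu^{\star}(1,s)=1$ and $1$ stands first in Greatest Prime Order, this yields
\begin{equation*}
\sum_{n\in\mathbb{N},\,(GPO)}\mu^{\star}(n,s)\;=\;1+(-1)\;=\;0,
\end{equation*}
which is the claimed identity.

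The main obstacle is not the algebra but the control of order and convergence. The series is only conditionally convergent, so the Riemann-rearrangement caution raised in the proof of Theorem~\ref{thm:main} applies verbatim: the logarithm must be taken, and the factor $z$ extracted, strictly within Greatest Prime Order, never by rearranging individual summands. I would therefore anchor each step to the GPO-ordered partial products guaranteed by Theorem~\ref{thm:second}, and note that choosing the branch of the logarithm along the path of evaluation points --- all of which tend to the origin --- is exactly what makes $\ln f(\alpha_n z)=\alpha_n z$ an equality rather than a congruence modulo $2\pi i$.
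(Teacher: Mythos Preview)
Your proposal is correct and follows essentially the same route as the paper: specialize Theorem~\ref{thm:second} to $f(z)=e^{z}$, take logarithms, divide by $z$, and rewrite the resulting coefficient identity via Definition~\ref{def:defmus} together with $\mu^{\star}(1,s)=1$. Your added remarks on maintaining Greatest Prime Order and on the choice of logarithm branch are in fact more careful than the paper's own presentation, but they do not change the substance of the argument.
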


\begin{proof}
We choose $f(z) = e^z$, which is analytic everywhere in the complex plane.  Then Eq.~\ref{eq:primef} becomes
\begin{align*}
e^z\ \ & =\ \ \prod_
{\substack{\ \\
n \ \geq 2 \\
(GPO)
}}
\ \left[\exp\left(\frac{\prod\limits_{p\, \in \,P (n)}\left[p^{\,\,s \cdot \pi (p)} \,\,-1\right]^{1/{\pi (p)}}}{n\,^s}\,\cdot\, z \right)\right]^{\big {(-1)} ^{\,\omega (n) -1 }}\, \\
&=\ \ \exp\left( \sum_
{\substack{\ \\
n \ \geq 2 \\
(GPO)
}}
 \ \frac{{(-1)} ^{\,\omega (n) -1 }\ \cdot \prod\limits_{p\, \in \,P (n)}\left[p^{\,\,s \cdot \pi (p)} \,\,-1\right]^{1/{\pi (p)}}}{n\,^s}\,\cdot\, z \right)\,.
\end{align*}

We take logarithms of both sides, and divide by $z$. Then we move all right-hand terms to the left side of the equation, by changing each sign from $(-1)^{\omega (n)-1}$ to $(-1)^{\omega (n)}$,\, to obtain 
\begin{equation}
1\,\  +\,\  \displaystyle\sum_
{\substack{\ \\
n \ \geq 2 \\
(GPO)
}}
\ \  \frac{ {(-1)} ^{\omega (n) }\ \cdot \prod\limits_{p\, \in P (n)}\,\left[\, p^{\,\,s \cdot \pi (p)} \,\,-1 \right]^{1/\pi (p)}}{n\,^s}\, \ =\, \ 0\,. 
\end{equation}
Since $\mu^\star(1,\,s)=1$, we consult Definition~\ref{def:defmus}, and see that this is equivalent to Eq.~\ref{eq:primer}.
\end{proof}

\begin{corollary}
\label{cor:muzero}
\begin{equation}
\sum_{\substack{
n \in \mathbb{N} \\
(GPO)
}}
 \mu(n) = 0.
\end{equation}
\end{corollary}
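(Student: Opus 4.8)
The plan is to obtain this corollary as the $\Re(s)\to\infty$ limit of the prime number identity Eq.~\ref{eq:primer}. That identity asserts $\sum_{n,\,(GPO)}\mu^{\star}(n,s)=0$ for every admissible $s$, and Corollary~\ref{cor:mumus} supplies the termwise limit $\mu^{\star}(n,s)\to\mu(n)$. If the limit could be passed through the (GPO-ordered) summation, the conclusion $\sum_{n,\,(GPO)}\mu(n)=0$ would follow at once, so the whole content of the proof is the legitimacy of that interchange.

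First I would confront the obstacle head-on: the series is only conditionally convergent, which is exactly why the paper insists on a fixed summation order, so neither naive term-by-term passage to the limit nor a crude dominating argument (the bound $|\mu^{\star}(n,s)|\le 1$ is not summable over $\mathbb{N}$) is available. The observation that rescues the interchange is that Greatest Prime Order (Definition~\ref{def:GPO}) already partitions $\mathbb{N}$ into blocks indexed by the greatest prime factor $p_m$, and I would take the limit block by block. Within the block $\{\,n:\max(P(n))=p_m\,\}$ every non-squarefree $n$ has $\mu(n)=0$, so the limiting contribution of the entire block is a $\emph{finite}$ sum — only the squarefree integers with greatest prime $p_m$ survive — and passing $\Re(s)\to\infty$ through a finite sum is immediate via Corollary~\ref{cor:mumus}.

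Next I would evaluate each block limit explicitly. For $m\ge 2$ the squarefree integers with greatest prime $p_m$ are exactly $n=p_m\cdot d$, where $d$ ranges over products of distinct primes drawn from $\{p_1,\dots,p_{m-1}\}$; since $\mu(p_m d)=-\mu(d)=-(-1)^{\omega(d)}$, the block sum is $-\sum_{T\subseteq\{p_1,\dots,p_{m-1}\}}(-1)^{|T|}=-(1-1)^{\,m-1}=0$. The block $m=1$ (the powers of $2$) contributes only $\mu(2)=-1$, and the isolated term $n=1$ contributes $\mu(1)=+1$. Hence in GPO order the series reads $1+(-1)+0+0+\cdots$, whose partial sums stabilize at $0$; this simultaneously establishes convergence in the prescribed order and yields the value $0$.

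The main obstacle is therefore not any single computation but the interchange of a limit with a conditionally convergent sum, and the plan neutralizes it by exploiting the fact that GPO bundles the sum into blocks that become finite once $\mu$ vanishes on non-squarefree integers, permitting the limit to be taken block by block. As a consistency check I would confirm that the block-wise limits above agree with the $\Re(s)\to\infty$ reduction of Eq.~\ref{eq:primer}, so that the limit of the (already zero) identity reproduces precisely these grouped M\"obius values rather than introducing a spurious rearrangement.
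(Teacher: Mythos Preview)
Your approach matches the paper's: the paper's entire proof is the single line ``take $\Re(s)\to\infty$ in Eq.~\ref{eq:primer} and invoke Corollary~\ref{cor:mumus},'' with no discussion whatsoever of the interchange of limit and sum. You go well beyond this by confronting that interchange and then evaluating each GPO block of $\sum\mu(n)$ directly; the block computation (each block with $m\ge 2$ vanishes by the binomial identity, the $m=1$ block contributes $-1$, and $n=1$ contributes $+1$) is correct and in fact already proves the corollary on its own, without needing Eq.~\ref{eq:primer} at all.

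One small residual point: your block-by-block passage to the limit does not by itself close the gap you set out to close, since within each block the $\mu^{\star}$-sum is still an infinite sum (only the limiting $\mu$-sum is effectively finite), and the outer sum over $m$ also needs its own justification. So as a rigorous derivation \emph{from} Eq.~\ref{eq:primer} the argument is not quite complete---but since your direct evaluation of the M\"obius series in GPO order already establishes the statement independently, the conclusion stands and is in fact on firmer ground than the paper's own proof.
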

\begin{proof}
Eq.~\ref{eq:primer}  is an identity for all $s$; we take the limit $\Re(s) \rightarrow \infty$, and use Corollary~\ref{cor:mumus}.
\end{proof}
We hope our techniques illuminate the relationship between function smoothness and the prime numbers.
%%                                          %%
%% Backmatter begins here                   %%
%%                                          %%
%%%%%%%%%%%%%%%%%%%%%%%%%%%%%%%%%%%%%%%%%%%%%%%%%%%%%%%%%%%%%
%%                  The Bibliography                       %%
%%                                                         %%
%%  Bmc_mathpys.bst  will be used to                       %%
%%  create a .BBL file for submission.                     %%
%%  After submission of the .TEX file,                     %%
%%  you will be prompted to submit your .BBL file.         %%
%%                                                         %%
%%                                                         %%
%%  Note that the displayed Bibliography will not          %%
%%  necessarily be rendered by Latex exactly as specified  %%
%%  in the online Instructions for Authors.                %%
%%                                                         %%
%%%%%%%%%%%%%%%%%%%%%%%%%%%%%%%%%%%%%%%%%%%%%%%%%%%%%%%%%%%%%
% if your bibliography is in bibtex format, use those commands:
%\bibliographystyle{bmc-mathphys} % Style BST file (bmc-mathphys, vancouver, spbasic).
%\bibliography{bmc_article}      % Bibliography file (usually '*.bib' )
% or include bibliography directly:

\
\end{document}